\newtheorem{thm}{Theorem}[section]
\newtheorem{theorem}[thm]{Theorem}
\newtheorem{lemma}[thm]{Lemma}
\newtheorem{prop}[thm]{Proposition}
\newcommand{\beq}{\begin{equation}}
\newcommand{\eeq}{\end{equation}}
\newcommand{\beqa}{\begin{eqnarray}}
\newcommand{\eeqa}{\end{eqnarray}}
\newcommand{\beqas}{\begin{eqnarray*}}
\newcommand{\eeqas}{\end{eqnarray*}}
\newcommand{\bi}{\begin{itemize}}
\newcommand{\ei}{\end{itemize}}
\newcommand{\vgap}{\vspace{.1in}}
\def\endproof{{\ \hfill\hbox{%
      \vrule width1.0ex height1.0ex
    }\parfillskip 0pt}\par}
\newcommand{\R}{\mathbb{R}}
\newcommand{\lam}{{\lambda}}
\newcommand{\inner}[2]{\langle #1,#2\rangle}
\newcommand{\argmin}{\mathrm{argmin}\,}
\newcommand{\dom}{\mathrm{dom}\,}
\newcommand{\Argmin}{\mathrm{Argmin}\,}
\newcommand{\Conv}[1]{\mbox{\rm Conv}(\R^{#1})}
\newcommand{\bConv}[1]{\overline{\mbox{\rm Conv}}\,(\R^{#1})}
\newcommand*{\E}{\mathbb{E}}
\begin{document}
\title{Parameter-free proximal bundle methods
with 
adaptive \\stepsizes
	for hybrid convex composite optimization problems}
\date{October 27, 2023}
\author{		
        Renato D.C. Monteiro \thanks{School of Industrial and Systems
		Engineering, Georgia Institute of
		Technology, Atlanta, GA, 30332-0205.
		(email: {\tt rm88@gatech.edu} and {\tt hzhang906@gatech.edu}). This work
			was partially supported by AFOSR Grant FA9550-22-1-0088.}\qquad 
	 Honghao Zhang 
   \footnotemark[1]
	}

 \maketitle
\begin{abstract}
   This paper develops a parameter-free adaptive  proximal bundle method with two important features:
1) adaptive choice
of variable prox stepsizes that "closely fits" the instance under consideration; and
2) adaptive criterion for making the occurrence of serious steps easier.
Computational experiments show that our method performs substantially fewer consecutive null steps (i.e., a shorter cycle)
while maintaining the number of serious steps under control.  As a result,
our method performs significantly
less number of iterations than
its counterparts
based on a constant prox stepsize choice and a non-adaptive cycle termination criterion.
Moreover, our method is very
 robust relative to the  user-provided initial stepsize. 

 \vgap

 {\bf Key words.} hybrid  convex composite optimization, iteration-complexity, adaptive stepsize,
 parameter-free proximal bundle methods.

\vgap
		
		{\bf AMS subject classifications.} 
		49M37, 65K05, 68Q25, 90C25, 90C30, 90C60
  
\end{abstract}
\section{Introduction}

	Let $ f, h: \R^{n} \rightarrow \R\cup \{ +\infty \} $ be proper lower semi-continuous  convex functions such that
	$ \dom h \subseteq \dom f $ and
	consider
	the optimization problem
	\begin{equation}\label{eq:ProbIntro}
	\phi_{*}:=\min \left\{\phi(x):=f(x)+h(x): x \in \R^n\right\}.
	\end{equation}
	It is said that \eqref{eq:ProbIntro} is a  hybrid convex composite optimization (HCCO) problem if there exist
	nonnegative scalars $M,L$ 
	and a
	first-order oracle $f':\dom h \to \R^n$
	(i.e., $f'(x)\in \partial f(x)$
	for every $x \in \dom h$)
	satisfying 
	$\|f'(u)-f'(v)\| \le 2M +L\|u-v\|$ for every $u,v \in \dom h$. The main goal of this paper
	is to study the complexity of
	adaptive proximal bundle methods (Ad-GPB) for solving the HCCO problem \eqref{eq:ProbIntro}
	based on a unified bundle update schemes.

 
Proximal bundle (PB) methods solve a sequence of
prox bundle subproblems
\begin{equation}\label{def:xj}
	    x_j = \underset{u\in \R^n}\argmin \left \{\Gamma_j (u) + \frac{1}{2\lam_j} \|u-x^c \|^2 \right\},
	\end{equation}
 where $\Gamma_j$ is a bundle approximation of $\phi$ (i.e., a simple convex function underneath $\phi$) and
 $x^c$ is the current prox center.
 The prox center is updated to $x_j$ (i.e., a serious step is performed) only when the pair $(x_j,\lam_j)$ satisfies a certain error criterion;
 otherwise, the prox center is kept the same
 (i.e., a null step is performed). Regardless of the step performed, the bundle $\Gamma_j$ is updated to account for the newest iterate $x_j$.
 In the discussion below, a sequence of consecutive null steps followed
 by a serious step is referred to as a cycle.
 Classical PB methods (see e.g. \cite{diaz2023optimal,du2017rate,kiwiel2000efficiency, lemarechal1975extension,lemarechal1978nonsmooth,Mifflin1982,wolfe1975method})
 perform the serious step
 when $x_j$ satisfies a
relaxed descent condition
(e.g., see the paragraph containing  equation (15) in \cite{liang2021proximal}), which in its unrelaxed form implies  that $\phi(x_j) \le \phi(x^c)$.
On the other hand, modern
PB methods (see e.g. \cite{guigues2024universal,liang2021proximal,liang2024unified,liang2023proximal})
perform the serious step
 when  the best $\phi$-valued iterate $x_j$, say $y_j$,
 satisfies $\phi(y_j)-m_j \le \delta$ where
 $m_j$ is the optimal value of \eqref{def:xj} and 
 $\delta$ is a suitably chosen tolerance.
Although $y_j$  does
 not necessarily satisfy the descent condition,
 it does satisfy a  $\delta$-relaxed version of it.
 It is shown in \cite{liang2021proximal,liang2024unified} that if
 $\lam>0$ is such that
 $\max\{ \lam , \lam^{-1} \} = {\cal O}(\varepsilon^{-1})$,
 then
 modern PB methods with $\lam_j=\lam $ for every $j$ achieve an $\tilde {\cal O}(\varepsilon^{-2})$ 
 iteration complexity to obtain an
 $\varepsilon$-solution
 regardless of whether 
 $\dom h$ is bounded or not.
 In contrast, papers \cite{diaz2023optimal,kiwiel2000efficiency}
 show that the classical 
 PB methods achieve:
 i) 
an ${\cal O}(\varepsilon^{-3})$
iteration complexity under
the assumption that
$\lam = \Theta(1)$ regardless of whether $\dom h$ is bounded or not;
and ii)
an ${\cal O}(\varepsilon^{-2})$
iteration complexity under
the assumption that
$\lam = \Theta(\varepsilon^{-1})$ for the case where $\dom h$ is bounded. 

The goal of this paper is to develop a parameter-free adaptive modern PB method,
namely Ad-GPB, with two important features:
1) adaptive choice
of variable prox stepsizes that "closely fits" the instance under consideration; and
2) adaptive criterion for making the occurrence of serious steps easier.
Computational experiments show that Ad-GPB performs substantially fewer consecutive null steps 
while maintaining the number of serious steps under control.  As a result,
Ad-GPB performs significantly
less number of iterations than
the Ad-GPB method of \cite{guigues2024universal,liang2021proximal,liang2024unified}.
Moreover, in contrast to GPB, Ad-GPB is very
 robust with respect to the  user-provided initial stepsize. 

Several papers (see e.g. \cite{bonnans1995family,de2016doubly,diaz2023optimal,karmitsa2010adaptive,doi:10.1137/040603929,lemarechal1997variable} of which only \cite{diaz2023optimal} deals with complexity analysis), have proposed ways of generating variable prox stepsizes to improve 
classical PB methods' computational performance. 
 More recently, \cite{guigues2024universal} developed a modern PB method for solving either the convex or strongly convex version of \eqref{eq:ProbIntro} which: requires no knowledge of the Lipschitz parameters $(M,L)$ and the strong convex parameter $\mu$ of $\phi$; and
 allows the stepsize to change only at the beginning of each cycle.
 A potential drawback of the method of \cite{guigues2024universal} is that
 it can restart a cycle with its current initial prox stepsize $\lam$ divided by two if $\lam$ is found to be large, i.e., the method can backtrack.
 In contrast, by allowing the prox stepsizes to vary within a cycle, Ad-GPB never has to restart a cycle.

In theory, classical PB methods perform on average ${\cal O}(\varepsilon^{-2})$ consecutive null iterations while modern PB methods perform only ${\cal O}(\varepsilon^{-1})$
consecutive null iterations in the worst case.
 The explanation for this phenomenon is due to the more relaxed $\delta$-criterion used by
modern PB methods to end a cycle. Our Ad-GPB method pursues the idea of further relaxing the cycle termination criterion to reduce its overall number of iterations, and hence improve its computational performance while retaining all the theoretical guarantees of the modern PB methods of \cite{guigues2024universal,liang2021proximal,liang2024unified}. More specifically, under the simplifying assumption that $\phi_*$ is known,
an Ad-GPB cycle stops when, for some universal constant $\beta \in (0,1]$,
the inequality
$\phi(y_j)-m_j \le \delta + \beta [ \phi(y_j)-\phi_*]$ 
is satisfied.
The addition of the (usually large) term $\beta [ \phi(y_j)-\phi_*]$ makes this inequality easier to satisfy, thereby resulting in Ad-GPB performing shorter cycles.
Even though the previous observation assumes that
$\phi_*$ is known, Ad-GPB
removes this assumption,
at the expense of assuming that the domain of $h$ is bounded,
by replacing $\phi_*$ in the above inequality with a suitable lower bound on $\phi_*$.

     {\it Organization of the paper.}
Subsection~\ref{subsec:DefNot}  presents basic definitions and notation used throughout the paper.
Section~\ref{mainpr} contains two subsections. Subsection~\ref{Sec:Ass} formally
describes problem \eqref{eq:ProbIntro} and the assumptions made on it. Subsection~\ref{Algorithm} presents a generic bundle update scheme, the Ad-GPB framework, and states the main iteration-complexity result for Ad-GPB.
Section \ref{cyclen} provides a bound  on the number of iterations within
a cycle of Ad-GPB. Section \ref{outersec} contains two subsections. The first (resp, second) one establishes bounds on the number of cycles and the total number of iterations performed by
Ad-GPB
under the assumption that $\phi_*$ is known (resp., unknown).
Section \ref{numer} presents the numerical results comparing  Ad-GPB with the two-cut bundle update scheme  against other mordern PB methods.

 \subsection{Basic definitions and notation} \label{subsec:DefNot}

    The sets of real numbers and positive real numbers are denoted by $\R$ and $\R_{++}$, respectively. 
	Let $\R^n$ denote the standard $n$-dimensional Euclidean space equipped with  inner product and norm denoted by $\left\langle \cdot,\cdot\right\rangle $
	and $\|\cdot\|$, respectively. 
	Let $\log(\cdot)$ denote the natural logarithm and $\log^+(\cdot)$ denote $\max\{\log(\cdot),0\}$. Let ${\cal O}$ denote the standard big-O notation. 

	For a given function $\varphi: \R^n\rightarrow (-\infty,+\infty]$, let $\dom \varphi:=\{x \in \R^n: \varphi (x) <\infty\}$ denote the effective domain of $\varphi$ 
	and $\varphi$ is proper if $\dom \varphi \ne \emptyset$.
	A proper function $\varphi: \R^n\rightarrow (-\infty,+\infty]$ is convex if
	$$
	\varphi(\alpha x+(1-\alpha) y)\leq \alpha \varphi(x)+(1-\alpha)\varphi(y)
	$$
	for every $x, y \in \dom \varphi$ and $\alpha \in [0,1]$.
 Denote the set of all proper lower semicontinuous convex functions by $\bConv{n}$.
 
	The subdifferential of $ \varphi $ at $x \in \dom \varphi$ is denoted by
	\beq \label{def:subdif}
 \partial \varphi (x):=\left\{ s \in\R^n: \varphi(y)\geq \varphi(x)+\left\langle s,y-x\right\rangle, \forall y\in\R^n\right\}.
        \eeq
 The set of proper closed convex functions $\Gamma$ such that $\Gamma \le \phi$ is denoted 
 by ${\cal B}(\phi)$ and any such $\Gamma$ is called a bundle for $\phi$. 
 
 The sign function $\rm sign:R^n \to R^n $ is defined as
 $$\rm sign(x)_i = \begin{cases}
 -1,& \mbox{if $x_i <0$},\\
  \ \ 0,& \mbox{if $x_i =0$},\\
  \ \ 1,& \mbox{if $x_i >0$}.
 \end{cases}$$
\section{Main problem and algorithm}\label{mainpr}
This section contains two subsections. The first one describes the main problem and corresponding assumptions. The second one presents the motivation and the description of Ad-GPB, as well as its main complexity result. 
\subsection{Main problem}\label{Sec:Ass}
The problem of interest in this paper is \eqref{eq:ProbIntro} which is
	assumed to satisfy the following conditions
	for some
	constants $M \ge 0$ and $L \ge 0 $:
	\begin{itemize}
 \item[(A1)]
		 $h \in \bConv{n}$ and
		there exists $D \ge 0$ such that
    \begin{equation}\label{defD}
    \sup_{x,y \in \dom h} \|y-x\|\le D;
    \end{equation}
		\item[(A2)]
		$f \in \bConv{n}$ is such that
		$\dom h \subset \dom f$, and a subgradient oracle,
		 i.e.,
		a function $f':\dom h \to \R^n$
		satisfying $f'(x) \in \partial f(x)$ for every $x \in \dom h$, is available;
		\item[(A3)]
		for every $x,y \in \dom h$,
		\[
		\|f'(x)-f'(y)\| \le 2M+L\|x-y\| .
		\]
	\end{itemize}
 \color{black} 
 In addition to the above assumptions, it is also assumed that $h$ is simple in the sense that,
for any $\lam>0$ and
affine function ${\cal A}$, the following two optimization problems 
\begin{equation}\label{easyso}
\min_u {\cal A} (u) + h(u), \qquad \min_u {\cal A}(u) + h(u) + \frac{1}{2\lam}\|u\|^2
\end{equation}
are easy to solve. 

We now make three remarks about assumptions (A1)-(A3). First, it can be shown that (A1) implies that both problems in \eqref{easyso} have optimal solutions. Second, it can also be shown that (A1) implies that the set of optimal solutions $X^*$ of
		problem \eqref{eq:ProbIntro} is nonempty.
Third,
 letting
 $\tilde \ell_f(\cdot;x)$
 denotes the linearization of $f$ at $x$, i.e.,
    \begin{equation}\label{def:gamma}
	\tilde \ell_f(\cdot;x) := f(x)+\inner{f'(x)}{\cdot-x} \quad \forall x\in \dom h,
	\end{equation}
  then it is well-known that (A3) implies that for every $x,y \in \dom h$,	\begin{equation}\label{ineq:est}
	f(x)-\tilde \ell_f(x;y) \le 2M \|x-y\| + \frac L2\|x-y\|^2 .
	\end{equation}
  
  Finally, define the composite linearization of the objective $\phi$  of \eqref{eq:ProbIntro} at $x$ as
 \begin{equation}
     \ell_{\phi}(\cdot;x) := \tilde \ell_f(\cdot;x)+h(\cdot)\quad \forall x\in \dom h.
 \end{equation}
\subsection{Algorithm}\label{Algorithm}
As mentioned in the Introduction, PB uses a bundle (convex) function 
underneath $\phi(\cdot)$ to
construct subproblem \eqref{def:xj} at a given iteration, and then
updates $\Gamma_j$ to obtain the bundle function $\Gamma_{j+1}$ for the next iteration.
This subsection describes ways of updating
the bundle.
	Instead of focusing
	on a specific bundle update scheme, 
	this subsection
 describes 
	a generic bundle update framework (BUF) which is a restricted version of the one introduced in Subsection 3.1 
 of \cite{liang2024unified}. It also discusses two concrete bundle update schemes lying within the framework.


    We start by describing the generic BUF.


  \noindent\rule[0.5ex]{1\columnwidth}{1pt}
	
	BUF

    \noindent\rule[0.5ex]{1\columnwidth}{1pt}
    {\bf Input:} $\lam \in \R_{++} $ and
    $(x^c,x,\Gamma) \in \R^n \times
    \R^n \times {\cal B}(\phi)$ such that 
      \begin{equation}\label{eq:x-pre}
    x = \underset{u\in \R^n}\argmin \left \{\Gamma (u) + \frac{1}{2\lam} \|u-x^c\|^2 \right\},
\end{equation}
    \begin{itemize}
        \item
        find bundle $\Gamma^+ \in   {\cal B}(\phi)$ satisfying
\begin{equation}\label{def:Gamma}
	     \Gamma^+ (\cdot) \ge  \max \{ \bar \Gamma(\cdot)  , \ell_{\phi}(\cdot;x)\},
	\end{equation}
   for some
   $\bar \Gamma(\cdot) \in \bConv{n}$ such that
\begin{equation}\label{def:bar Gamma}
 \bar \Gamma(x) = \Gamma(x), \quad 
	x = \underset{u\in \R^n}\argmin \left \{\bar \Gamma (u) + \frac{1}{2\lam} \|u-x^c\|^2 \right\}.
	\end{equation}
     \end{itemize}
     {\bf Output:} $\Gamma^+$.

\noindent\rule[0.5ex]{1\columnwidth}{1pt}

Now we make some remarks about BUF. First, observe that if $\Gamma \in {\cal B}(\phi)$ and $\Gamma \ge \ell_\phi(\cdot;\bar x) $ for some $\bar x \in \R^n$,
then $\dom \Gamma = \dom h$.
Hence, it follows from \eqref{def:Gamma} and the definition of ${\cal B}(\phi)$  that   the output $\Gamma^+$ of BUF satisfies
\[
\Gamma^+ \le \phi, \quad
    \Gamma^+ \in \bConv{n}, \quad \dom \Gamma^+ = \dom h.\]
Second,  the bundle update framework of \cite{liang2024unified} replaces \eqref{def:Gamma} by the weaker inequality $\Gamma^+ (\cdot) \ge  \tau \bar \Gamma(\cdot) +(1-\tau)  \ell_{\phi}(\cdot;x)$ for some $\tau \in (0,1)$ and, as
a result, contains the one-cut bundle update scheme described in Subsection 3.1 of \cite{liang2024unified}.
Even though 
 BUF does not include the  one-cut bundle update scheme,
 it contains the two other bundle update schemes discussed in  \cite{liang2024unified} (see Subsection 3.1), which for convenience are briefly described below.


\begin{itemize}

	    \item \textbf{2-cut:}
        For this scheme, it is assumed that
        $\Gamma$
        has the form 
        \begin{equation}\label{def:Gamma-E2}
            \Gamma=\max \{A_f,\tilde \ell_f(\cdot;x^-)\}+h
        \end{equation}
        where $h \in \Conv{n}$ and
        $A_f$ is an affine function satisfying $A_f\le f$. In view of \eqref{eq:x-pre},
        it can be shown that there exists
        $\theta \in [0,1]$ such that
        \begin{align}
            &\frac1\lam (x-x^c) + \partial h(x) 
+ \theta \nabla A_f + (1-\theta) f'(x^-) \ni 0, \label{theta1} \\
            &\theta A_f(x) + (1-\theta) \ell_f(x;x^-) = \max \{A_f(x),\tilde \ell_f(x;x^-)\}. \label{theta2}
        \end{align}
        The scheme then sets
        \begin{equation}\label{def:Af+}
         A_f^+(\cdot) :=  \theta A_f(\cdot) + (1-\theta) \tilde \ell_f(\cdot;x^-)
     \end{equation}
     and outputs the function $\Gamma^+$
      defined as
\begin{equation}\label{eq:G-agg}
		    \Gamma^+(\cdot)  := 
		    \max\{A_f^+(\cdot),\tilde \ell_f(\cdot;x)\} + h(\cdot).
	    \end{equation}

	    \item  \textbf{ multiple-cut (M-cut):}
	    Suppose $\Gamma=\Gamma(\cdot;C)$ where
	    $C \subset \R^n$ is a finite set (i.e., the current  bundle set) and $\Gamma(\cdot;C)$ is defined as
	    \begin{equation}\label{eq:Gamma-E2}
	        \Gamma(\cdot;C) := \max \{ \tilde \ell_f(\cdot;c) : c \in C \}+h(\cdot).
	    \end{equation}
	     This scheme 
      chooses the next bundle set $C^+$ so that
	    \begin{equation}\label{eq:C+}
        C(x) \cup \{x\} \subset C^+ \subset C \cup \{x\}
        \end{equation}
        where
        \begin{equation}\label{def:C+}
            C(x) := \{c \in C : \tilde \ell_f(x;c)+h(x) = \Gamma(x) \},
        \end{equation}
         and then
	    output $\Gamma^+ = \Gamma(\cdot;C^+)$.
	\end{itemize}
The following facts, 
whose proofs can be found  in Appendix D of \cite{liang2024unified},
imply that
2-cut and M-cut schemes are 
 special implementations of  BUF:

\begin{itemize}\label{lem:E2}
    \item[(a)]
    If $\Gamma^+$ is obtained according to 2-cut, then
     $(\Gamma^+,\bar \Gamma)$ where
   $\bar \Gamma = A_f^+ + h$ satisfies \eqref{def:Gamma} and \eqref{def:bar Gamma};
    \item[(b)] 
    If $\Gamma^+$ is obtained according to M-cut,
     then
     $(\Gamma^+,\bar \Gamma)$ where
   $\bar \Gamma =\Gamma(\cdot;C(x))$
    satisfies \eqref{def:Gamma} and \eqref{def:bar Gamma}.
	\end{itemize}

%
We next give an outline of Ad-GPB.
Ad-GPB is an inexact proximal point method which,
given the $(k-1)$-th prox center $\hat{x}_{k-1} \in \mathbb{R}^n$,
finds a pair $(\hat x_k,\hat \lam_k)$ of prox stepsize $\hat \lam_k>0$ and  $k$-th prox-center $\hat x_k$ satisfying
a suitable error criterion
for being 
an approximate solution of the prox subproblem
\begin{equation}\label{subpro}
\hat{x}_k \approx \underset{u \in \mathbb{R}^n}{\operatorname{argmin}}\left\{\phi(u)+\frac{1}{2 \hat \lambda_k}\left\|u-\hat{x}_{k-1}\right\|^2\right\}.
\end{equation}
More specifically,
Ad-GPB solves a sequence of
prox bundle subproblems
\begin{equation}\label{def:xj-1}
	    x_j = \underset{u\in \R^n}\argmin \left \{\Gamma_j (u) + \frac{1}{2\lam_j} \|u-\hat x_{k-1} \|^2 \right\},
	\end{equation}
 where $\Gamma_j$ is a bundle approximation of $\phi$  and $\lam_j \le \lam_{j-1}$ is an adaptively chosen prox stepsize,
 until the pair $(\hat x_k,\hat \lam_k)=(x_j,\lam_j)$ satisfy the approximate error criterion for \eqref{subpro}.
 In contrast to the GPB method of \cite{liang2024unified}, which can also be viewed in the setting outlined above,
Ad-GPB: i) (adaptively) changes  $\lam_j$ while computing the next prox center $\hat x_k$; and, 
ii) Ad-GPB stops the search for the next prox center $\hat x_k$ using a termination criterion based not only on the user-provided tolerance (the quantity $\varepsilon$ in the description below) as   GPB also does, but also on a suitable primal-dual gap for \eqref{subpro}, a feature that considerably speeds up the computation of  $\hat x_k$ for many subproblems \eqref{subpro}.

     We now formally describe Ad-GPB.
     Its description uses the definition of the set of bundles ${\cal B}(\phi)$ for the function $\phi$
     given in Subsection \ref{subsec:DefNot}.
     
\noindent\rule[0.5ex]{1\columnwidth}{1pt}
	
	Ad-GPB
	
	\noindent\rule[0.5ex]{1\columnwidth}{1pt}
	\begin{itemize}
		\item [0.] 
  Let $\hat x_0\in \dom h $, $\lam_1>0$, $0 \le \beta_0 \le 1/2$, $\tau \in (0,1)$, and $\varepsilon>0$ be given; 
  find $\Gamma_1 \in {\cal B}(\phi)$ such that 
  $\Gamma_1 \ge \ell_{\phi}(\cdot; \hat x_0)$ and
  set  $\hat \ell_0 = \min_u \Gamma_1(u)$,
	$y_0= \hat x_0$, $j_0=0$, $j=k=1$, and $\hat n_0 = \hat \ell_0 $;
	    \item[1.]
	    compute  $x_j$ as in \eqref{def:xj-1} and
		   \begin{align} 
     m_j &:= \Gamma_{j}(x_j) +\frac{1}{2\lam_j} \|x_j- \hat x_{k-1} \|^2\label{def:mj} \\
     y_j &:= \argmin
     \left \{\phi(x) : x \in \{y_{j-1},x_j\} \right\} \label{def:yj}\\
     t_j &:= \phi(y_j) - m_{j
};\label{def:tj}
	    \end{align} 
		\item[2.] {\textbf{if}}  $t_j \le \beta_{k-1}[\phi(y_j)-\hat n_{k-1}]+\varepsilon/4 $
  is violated {\textbf{then}} perform a {\textbf{null update}, i.e.:}\\  
         $\hspace*{0.8cm}${\textbf{if}} either $j = j_{k-1}+1$ or
         \begin{equation}\label{keyineq}
        t_j -\tau t_{j-1} \le (1-\tau)\left \{ \frac{\beta_{k-1}[\phi(y_j)-\hat n_{k-1}]}{2} + \frac{\varepsilon}{8} \right\}  ,
     \end{equation}
      $\hspace*{0.8cm}$then set $\lam_{j+1}=\lam_{j}$; else, set
$\lam_{j+1}=\lam_{j}/2$;
    \\
    $\hspace*{0.8cm}$set $\Gamma_{j+1}=\mbox{BUF}(\hat x_{k-1},x_j,\Gamma_j,\lambda_j)$;\\
        {\textbf{else}} perform a {\bf serious update}, i.e.:\\ 
           $\hspace*{0.8cm}$set 
         $(\hat \lambda_k,\hat x_k, \hat y_k, \hat \Gamma_k,\hat m_k,\hat t_k)=(\lambda_j, x_j, y_j, \Gamma_j,m_j,t_j)$;\\
          $\hspace*{0.8cm}$compute
          \begin{align}
           \hat \Gamma^a_k (u) := \frac{ \sum_{l=\lceil k/2 \rceil}^k\hat \lam_l \hat \Gamma_l(u) }
 {\sum_{l=\lceil k/2 \rceil}^k \hat \lam_l},      \quad      \hat \ell_{k} &:= \max \left\{\hat \ell_{k-1},\inf_{u} \hat \Gamma_k^a (u) \right\} ,\label{def:nk} 
 \end{align}
  $\hspace*{0.8cm}$and choose $\hat n_k \in [\hat \ell_k,\phi_*]$;\\
  $\hspace*{0.8cm}${\textbf{if}}
    $
    \phi(\hat y_k)-\hat n_{k} \le \varepsilon$,
  output $ (\hat x_k , \hat y_k)$, and 
   {\bf stop}; {\textbf{else}}  compute
 \begin{align}
           \hat \phi^a_k &: = \frac{ \sum_{l=\lceil k/2 \rceil}^k \hat \lam_l\phi(\hat y_l)}{ \sum_{l=\lceil k/2 \rceil}^k \hat \lam_l},\label{def:phik}\\
           \hat g_k &:= \frac 1{\sum_{l=\lceil k/2 \rceil}^k \hat \lam_l} \sum_{l =\lceil k/2 \rceil}^k \beta_{l-1} \hat \lam_l  [ \phi(\hat y_l) - \hat n_{l-1} ];\label{def:gk}
           \color{black}
          \end{align}
           $\hspace*{0.8cm}${\textbf{if}}
          $\hat g_k \le 
          (\hat \phi^a_k -\hat n_{k})/2$, then
          set 
     $\beta_k=\beta_{k-1}$; {\textbf{else}} set  $\beta_k=\beta_{k-1}/2$;\\
               $\hspace*{0.8cm}$set  $\lam_{j+1}=\lam_j$ and find   $\Gamma_{j+1} \in {\cal B}(\phi)$ such that
			$\Gamma_{j+1}\ge \ell_{\phi}(\cdot; x_j)$;\\
            $\hspace*{0.8cm}$set $j_k = j$ and $k \leftarrow k+1$;\\
            {\textbf{end if}}
         	\item[3.] set  $j\leftarrow j+1$ and go to step 1.
	\end{itemize}
	\rule[0.5ex]{1\columnwidth}{1pt}
  We now introduce some terminology related to Ad-GPB.  Ad-GPB performs two types of iterations, namely,
null and serious, corresponding to the kinds of updates performed at the end.
 The index $j$ counts the iterations (including null and serious).
 Let $
   j_1 \le j_2 \le \ldots$ denote the sequence of all serious iterations
   (i.e., the ones ending with a serious update) and, for every $k \ge 1$,
   define
   $i_k=j_{k-1}+1$ and the $k$-th cycle ${\cal C}_k$
   as 
  \begin{equation}\label{def:Ck}
     {\cal C}_k := \{ i_k,\ldots,j_k\}.
 \end{equation} 
   Observe that  for every $k \ge 1$, 
   we have
   $\hat \lambda_k = \lam_{j_k}$
  where  $\hat \lam
_k$   is computed in the serious update part of step 2 of Ad-GPB. 
(Hence, index $k$ counts the
 cycles generated by Ad-GPB.)  
 An iteration $j$ is called
 good (resp., bad) if $\lam_{j+1} = \lam_j$ (resp., $\lam_{j+1} = \lam_j/2$).
 Note that the logic of Ad-GPB implies that $i_k$ and $j_k$ are good iterations and that \eqref{keyineq} is violated whenever
 $j$ is a bad iteration.
 

  We next make some remarks about the quantities related to different $\Gamma$-functions that appear
  in Ad-GPB and the associated quantities $\hat \ell_k$ and $\hat n_k$.
  First, the observation immediately  following BUF implies that 
  \begin{equation}\label{phi-property}
        \Gamma_j \le \phi, \quad
    \Gamma_j \in \bConv{n}, \quad \dom \Gamma_j = \dom h \quad \forall j \ge 1,
    \end{equation}
 which together with the fact that
  $\hat \Gamma_k$ is the last $\Gamma_j$
  generated within a cycle imply that $\hat \Gamma_k \in \bConv{n}$ and $\hat \Gamma_k \le \phi$.
  Moreover, the first identity in 
  {\eqref{def:nk}} and the latter conclusion then imply that $\hat \Gamma_k^a \in \bConv{n}$ and $\hat \Gamma^a_k \le \phi$,
  and hence 
   that 
  $ \inf_{u} \hat \Gamma^a_k(u) \le \inf_u \phi(u) = \phi_*$. Second,  the facts that $\dom \hat \Gamma^a_k=\dom  h$ is bounded (see assumption A1)
  and 
  $\hat \Gamma^a_k$ is a closed convex function  imply that $\inf_u \hat \Gamma^a_k(u) > -\infty$. Moreover, the problem
 $\inf_{u} \hat \Gamma^a_k(u)$ has the same format as the first one that appears in \eqref{easyso},
 and hence is easily solvable by assumption.
 Its optimal value, which is a lower bound on $\phi_*$ as already observed above, is used to
  update the lower bound $\hat \ell_{k-1}$ for $\phi_*$ to a possibly sharper one, namely, $\hat \ell_k \ge \hat \ell_{k-1}$.
  Thus, the choice of $\hat n_k$ in the line following \eqref{def:nk}  makes sense.
 For the sake of future reference, we note that
\begin{equation}\label{keynk}
  \phi_* \ge \hat n_k \ge \hat \ell_k \ge \inf_{u} \hat \Gamma_k^a (u).
 \end{equation}
\color{black}
Third, obvious ways of choosing
 $\hat n_k$ in the interval
 $[\hat \ell_k,\phi_*]$ are:
 i) $\hat n_k=\phi_*$; and
 ii) $\hat n_k=\hat \ell_k$.
 While choice i) requires  knowledge of $\phi_*$, choice ii) does not  and can be easily implemented in view of the previous remark. Moreover, if $\phi_*$ is known and $\hat n_k$ is chosen as in i), then there is no need to compute $\hat l_k$,  and hence the min term in \eqref{def:nk},  at the end of every cycle. 
 

 We now make some other relevant remarks about Ad-GPB.
First, it follows from  \eqref{phi-property} and the definition of $x_j$ in \eqref{def:xj-1}   that  $x_j \in \dom h$ for every $j \ge 1$. Second, an induction argument using \eqref{def:yj}
and the fact that $y_0=\hat x_0 \in \dom h$ imply that 
$y_j \in \{\hat x_0, x_1, \ldots, x_j\} \subset \dom h$ and 
\begin{equation}\label{eq:minseq}
	 	y_j  \in \Argmin \left\lbrace \phi(x) :
		x \in \{ \hat x_{0}, x_1,\ldots,x_{j}\}	\right\rbrace
\end{equation} 
(hence, $\phi(y_{j+1}) \le \phi(y_{j})$) for every $j \ge 1$.
    Third, the cycle-stopping criterion, i.e., the inequality in the first line of step 2,
is a relaxation of the one used
by GPB method of \cite{liang2024unified}, in the sense its right-hand side has the extra term $\beta_{k-1}[\phi(y_j)-\hat n_{k-1}]$
involving the relaxation factor $\beta_{k-1}$. The addition of this term
allows earlier cycles to terminate in less number of inner iterations,
and hence speeds up the overall performance of the method.
The quantities in \eqref{def:phik}
 and \eqref{def:gk} are used to update  $\beta_{k-1}$ at the end of the $k$-th cycle (see 'if' statement after \eqref{def:gk}). 
 Fourth, the condition imposed on $\Gamma_{j+1}$ at the end of a serious iteration (see the second line below \eqref{def:gk}) does not completely specify it. An obvious way (cold start) of choosing this $\Gamma_{j+1}$ is to set it to be $\ell_{\phi}(\cdot;x_j)$;
 another way (warm start) is to choose it using the update rule of an null iteration, i.e., $\Gamma_{j+1}= \mbox{BUF}(\hat x_{k-1},x_j,\Gamma_j,\lambda_j)$ since \eqref{def:Gamma} implies the required condition on $\Gamma_{j+1}$.

We now comment on the inexactness of
$\hat y_k$ as a solution of
prox subproblem \eqref{subpro} and
as a solution
of \eqref{eq:ProbIntro} upon termination of Ad-GPB.
The fact that $
\hat \Gamma_k \leq \phi$ and the fact that $\hat t_k = t_{j_k}$  imply that the primal gap  of \eqref{subpro} at $\hat y_k$ is upper bounded by $\hat t_k+\left\|\hat y_k-\hat x_{k-1}\right\|^2 /(2 \hat \lambda_k)$. Hence, if the inequality for stopping the cycle in step 2 holds, then we conclude  that $\hat y_k$ is an $\varepsilon_k$-solution of \eqref{subpro}, where 
\[
\varepsilon_k:=\frac{\varepsilon}4+\beta_{k-1}[\phi(\hat y_k)-\hat n_{k-1}]+\frac{\left\|\hat y_k-\hat{x}_{k-1}\right\|^2 }{2 \hat \lambda_k}.
\]
Finally,  if the test inequality before \eqref{def:phik} in step 2 holds, then the second component $\hat y_k$ of the pair
 output by Ad-GPB satisfies $\phi(\hat y_k) - \phi_* \le \varepsilon$
 due to the fact that $\hat n_{k} \le \phi_*$.  
\color{black}
Lastly, Ad-GPB never restarts a cycle, i.e.,
attempts to inexactly solve two or more subproblems \eqref{subpro} with the same prox center $\hat x_{k-1}$. Instead, Ad-GPB has a key rule for updating the inner stepsize $\lam_j$ which always allows it to inexactly solve subproblem \eqref{subpro} with $\hat \lam_k$ set to be the last $\lam_j$ generated within the $k$-th cycle (see the
second line of the serious update part of Ad-GPB).





We now state the main complexity result for Ad-GPB whose proof is postponed to the end of Section \ref{outersec}.
 \begin{theorem}\label{main}
 Define 
\begin{align}
\bar t&:=  2MD+\frac{L}{2}D^2 \label{def:bar t}	,\\
\bar K(\varepsilon)&:=
  2\left \lceil\frac{2 D^2\bar Q}{\varepsilon}\left(\frac{ M^2 }{\varepsilon}+ \frac{ L}{16}+\frac{1}{\lam_1 \bar Q}\right)+ \log^+ \left \{\frac{\beta_0(\phi(\hat x_0)-\hat n_0)}{\varepsilon}\right \}+1 \right \rceil\label{defbarK}	    
\end{align}
   where
   \begin{equation}\label{def:barQ}
\bar Q= \frac{128  (1-\tau) }{\tau}.
\end{equation}
 Then,
Ad-GPB finds a pair $(\hat y_k,\hat n_k) \in \dom \phi \times \R$ satisfying $\phi(\hat y_k) - \phi_* \le \phi(\hat y_k) - \hat n_k \le \varepsilon$ in at most
$4\bar K(\varepsilon)$ cycles and

   \begin{equation}\label{eq:totalcomp}
   4\bar K(\varepsilon)
    \left(
    \frac{1+\tau}{1-\tau}\log^+ \left[ 8 \bar t\varepsilon^{-1} \right] 
    +2
    \right)  +\log_2^+ \left( \bar Q \lam_1 \left( \frac{ M^2 }{\varepsilon} + \frac{ L}{16}\right)\right)
   \end{equation}
   iterations.
\end{theorem}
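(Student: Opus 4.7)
The plan is to assemble the total iteration count from three separate bounds: (i) a per-cycle bound on the number of iterations inside a single cycle ${\cal C}_k$, which will come from Section~\ref{cyclen}; (ii) a bound of $4\bar K(\varepsilon)$ on the total number of cycles, which will come from Section~\ref{outersec}; and (iii) a global bound on the total number of \emph{bad} iterations ever performed. The correctness of the output, $\phi(\hat y_k) - \phi_* \le \phi(\hat y_k) - \hat n_k \le \varepsilon$, is essentially immediate: the explicit test before \eqref{def:phik} in Ad-GPB is precisely $\phi(\hat y_k) - \hat n_k \le \varepsilon$, and \eqref{keynk} gives $\hat n_k \le \phi_*$.

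The key observation enabling ingredient (iii) is that the stepsize sequence $\{\lam_j\}$ is non-increasing across the \emph{entire} run of Ad-GPB, not merely within a single cycle: null updates either preserve $\lam_j$ or halve it, and serious updates preserve it (see the second-to-last line of the serious branch of step~2). Hence the total number of bad iterations over all cycles equals $\log_2(\lam_1/\lam_{\min})$, where $\lam_{\min}$ is the smallest stepsize reached. Inside the cycle analysis of Section~\ref{cyclen}, one shows that once $\lam_j$ drops below a threshold of order $\bar Q^{-1}/(M^2/\varepsilon + L/16)$, inequality \eqref{keyineq} must hold and no further halving occurs. This yields the global bound
\[
\#\{\text{bad iterations}\} \;\le\; \log_2^+\!\left(\bar Q\,\lam_1\!\left(\frac{M^2}{\varepsilon}+\frac{L}{16}\right)\right),
\]
which is exactly the additive term in \eqref{eq:totalcomp}.

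For ingredient (i), Section~\ref{cyclen} will establish that each cycle consists of at most
$\frac{1+\tau}{1-\tau}\log^+[8\bar t/\varepsilon] + 2$
good iterations (the $+2$ absorbing the mandatory first and final iterations $i_k$ and $j_k$, which are always good by construction). For ingredient (ii), Section~\ref{outersec} will show that the number of cycles is at most $4\bar K(\varepsilon)$; the factor $4$ is expected to absorb both the doubling step of the bound argument and the halving of the relaxation parameter $\beta_k$ triggered when $\hat g_k > (\hat \phi^a_k - \hat n_k)/2$. Summing good iterations over cycles and adding the global bad-iteration count gives
\[
\#\{\text{iterations}\} \;\le\; 4\bar K(\varepsilon)\left(\frac{1+\tau}{1-\tau}\log^+[8\bar t\varepsilon^{-1}] + 2\right) + \log_2^+\!\left(\bar Q\,\lam_1\!\left(\frac{M^2}{\varepsilon}+\frac{L}{16}\right)\right),
\]
which is precisely \eqref{eq:totalcomp}.

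The main obstacle is the cycle-count bound $4\bar K(\varepsilon)$ of Section~\ref{outersec}, since the presence of the adaptive relaxation $\beta_{k-1}[\phi(y_j)-\hat n_{k-1}]$ in the cycle-termination test couples outer progress in $\phi(\hat y_k)-\hat n_k$ to the adaptively maintained lower bound $\hat n_k$ and to the adaptive $\beta_k$. A careful recursive estimate, likely using the weighted averages $\hat \Gamma^a_k$ and $\hat \phi^a_k$ restricted to the "second half" window $\lceil k/2 \rceil,\dots,k$, will be needed to drive $\phi(\hat y_k)-\hat n_k$ below $\varepsilon$ in $O(\bar K(\varepsilon))$ cycles regardless of whether $\hat n_k$ is chosen as $\phi_*$ or as the computable lower bound $\hat \ell_k$; the $\log^+\{\beta_0(\phi(\hat x_0)-\hat n_0)/\varepsilon\}$ term in \eqref{defbarK} reflects the additional halvings of $\beta_k$ that may be forced along the way.
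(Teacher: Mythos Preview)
Your plan is correct and matches the paper's proof: the total iteration bound is assembled exactly as you describe, combining Proposition~\ref{inner} and Lemma~\ref{lem:t1}(c) for the per-cycle good-iteration count, Lemma~\ref{lem:t1}(b) for the global bad-iteration count, and the $4\bar K(\varepsilon)$ cycle bound from Subsection~\ref{Sec:Gen}. The only part you leave underspecified is the mechanism behind the cycle bound: the paper's argument is not a ``recursive estimate'' but a dichotomy via Lemma~\ref{lem:typeA}---if any cycle $k\in\{\bar K,\dots,4\bar K\}$ has $\beta_k=\beta_{k-1}$ then part~(a) together with Lemma~\ref{lem:t1}(a) forces $\phi(\hat y_k)-\hat n_k\le\varepsilon$, while if $\beta_k=\beta_{k-1}/2$ throughout that range then part~(b) at $k=4\bar K$ and the second inequality in \eqref{coroN} give the contradiction; making this explicit is all that remains.
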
 

We now make some remarks about Theorem \ref{main}. First, in terms of $\tau$ and 
$\varepsilon$ only, it follows from Theorem \ref{main} that the iteration complexity of Ad-GPB to find a $\varepsilon$-solution of \eqref{eq:ProbIntro} is $\tilde{\cal O} \left(\varepsilon^{-2}\tau^{-1}+(1-\tau)^{-1})\right)$.
Hence, when $\tau \in (0,1)$ satisfies $\tau^{-1} = {\cal O}(1)$ and $\tau = 1- \Omega(\varepsilon^2)$, the total iteration complexity of Ad-GPB  is $\tilde {\cal O}(\varepsilon^{-2})$.
   Moreover, under the assumption that $\tau \in (0,1)$ satisfies
   $\tau^{-1} = {\cal O}(1)$,
    Ad-GPB performs
    \begin{itemize}
        \item 
        ${\cal O}(\varepsilon^{-2}) $ cycles whenever  $\tau = 1 - \Theta(1)$;
        \item 
        more generally, ${\cal O}(\varepsilon^{-\alpha}) $ cycles whenever  $\tau = 1 - \Theta(\varepsilon^{2-\alpha})$ for some $\alpha \in [0,2]$.
    \end{itemize}
\section{Bounding cycle lengths}\label{cyclen}

The main goal of this section is to derive a bound (Proposition \ref{inner} below) on the number of iterations within
a cycle.

Recall from \eqref{def:Ck} that $i_k$ (resp., $j_k$) denotes the first (resp., last) iteration index of the $k$-th cycle of Ad-GPB.
The first result describes some basic facts about the iterations within any given cycle.
\begin{lemma}\label{lem:101}
	    For every $j\in {\cal C}_k\setminus \{i_k\}$, the following statements hold:
	    \begin{itemize}
	        \item[a)] there exists function $\bar \Gamma_{j-1}(\cdot)$ such that
	        \begin{align}
	            &\max \left \{ \bar \Gamma_{j-1}(\cdot),\ell_f(\cdot;x_{j-1})+h(\cdot)\right\} \le \Gamma_{j}(\cdot) \le \phi(\cdot), \label{eq:Gamma_j-1} \\
	            &\bar \Gamma_{j-1} \in \Conv{n}, \quad \bar \Gamma_{j-1}(x_{j-1}) = \Gamma_{j-1}(x_{j-1}), \label{property}\\
	        &x_{j-1} = \underset{u\in \R^n}\argmin \left \{\bar \Gamma_{j-1} (u) + \frac{1}{2\lam_{j-1}} \|u-\hat x_{k-1}\|^2 \right\}; \label{eq:relation}
	        \end{align} 
	        \item[b)]  for every $u\in \R^n$, we have
       \begin{equation}\label{ineq:Gammaj-1}
           \bar \Gamma_{j-1}(u) + \frac1{2\lam_{j-1}}\|u-\hat x_{k-1}\|^2\ge m_{j-1}+ \frac1{2 \lam_{j-1}}\|u-x_{j-1}\|^2 .
       \end{equation}
	    \end{itemize}
	\end{lemma}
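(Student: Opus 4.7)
The plan is to unpack the definitions: since $j \in {\cal C}_k \setminus \{i_k\}$, iteration $j-1$ lies in ${\cal C}_k$ and is not the last iteration of the cycle, so iteration $j-1$ performs a null update. By the null-update branch of Ad-GPB, $\Gamma_j$ was produced by $\mathrm{BUF}(\hat x_{k-1}, x_{j-1}, \Gamma_{j-1}, \lam_{j-1})$. This input pair $(x^c, x, \Gamma, \lam) = (\hat x_{k-1}, x_{j-1}, \Gamma_{j-1}, \lam_{j-1})$ satisfies the required condition \eqref{eq:x-pre} by the definition of $x_{j-1}$ in \eqref{def:xj-1}.

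For part (a), the BUF specification \eqref{def:Gamma}--\eqref{def:bar Gamma} guarantees the existence of a function $\bar\Gamma \in \bConv{n}$ with $\bar\Gamma(x_{j-1}) = \Gamma_{j-1}(x_{j-1})$, with $x_{j-1} = \argmin_u\{\bar\Gamma(u) + \|u-\hat x_{k-1}\|^2/(2\lam_{j-1})\}$, and with $\Gamma_j(\cdot) \ge \max\{\bar\Gamma(\cdot), \ell_\phi(\cdot; x_{j-1})\}$. Setting $\bar\Gamma_{j-1} := \bar\Gamma$ immediately yields \eqref{property} and \eqref{eq:relation}. The bound $\Gamma_j \le \phi$ in \eqref{eq:Gamma_j-1} follows from the observation just after BUF (namely, \eqref{phi-property}), and the remaining inequality in \eqref{eq:Gamma_j-1} follows from the BUF inequality together with the identity $\ell_\phi(\cdot; x_{j-1}) = \tilde\ell_f(\cdot; x_{j-1}) + h(\cdot)$.

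For part (b), I would use strong convexity. The function $u \mapsto \bar\Gamma_{j-1}(u) + \|u-\hat x_{k-1}\|^2/(2\lam_{j-1})$ is the sum of a convex function and a $(1/\lam_{j-1})$-strongly convex quadratic, hence is $(1/\lam_{j-1})$-strongly convex. By \eqref{eq:relation}, its unique minimizer is $x_{j-1}$, and its minimum value is
\[
\bar\Gamma_{j-1}(x_{j-1}) + \frac{1}{2\lam_{j-1}}\|x_{j-1} - \hat x_{k-1}\|^2 = \Gamma_{j-1}(x_{j-1}) + \frac{1}{2\lam_{j-1}}\|x_{j-1} - \hat x_{k-1}\|^2 = m_{j-1},
\]
where the first equality uses $\bar\Gamma_{j-1}(x_{j-1}) = \Gamma_{j-1}(x_{j-1})$ from \eqref{property} and the second uses the definition of $m_{j-1}$ in \eqref{def:mj}. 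The standard lower bound for a strongly convex function around its minimizer then gives \eqref{ineq:Gammaj-1}.

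There is no real obstacle here; the only point requiring care is recognizing that iteration $j-1$ is genuinely a null iteration (so that $\Gamma_j$ did come from BUF rather than from the cold/warm start at the end of a serious iteration), which is exactly ensured by the hypothesis $j \in {\cal C}_k \setminus \{i_k\}$ together with $j \le j_k$.
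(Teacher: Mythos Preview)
Your proposal is correct and follows essentially the same approach as the paper: part (a) is obtained by instantiating the BUF guarantees \eqref{def:Gamma}--\eqref{def:bar Gamma} with input $(\hat x_{k-1},x_{j-1},\Gamma_{j-1},\lam_{j-1})$, and part (b) is the standard $(1/\lam_{j-1})$-strong-convexity lower bound at the minimizer $x_{j-1}$, together with the identity $\bar\Gamma_{j-1}(x_{j-1})=\Gamma_{j-1}(x_{j-1})$ to recognize the minimum value as $m_{j-1}$. If anything, you are slightly more explicit than the paper in justifying why iteration $j-1$ is a null iteration and in writing out the chain of equalities that identifies the minimum value with $m_{j-1}$.
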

	\begin{proof}
	    a) This statement immediately follows from \eqref{def:Gamma}, \eqref{def:bar Gamma}, and the facts that $\Gamma_{j}$ is the output of the BUF blackbox with input $\lam_{j-1}$ and $(x^c,x,\Gamma) = (x_{j-1}^c, x_{j-1},\Gamma_{j-1})$ and $x_{j-1}^c=\hat x_{k-1}$. 
	    
	    b)
	    Using \eqref{eq:relation} and the fact that $ f=\bar \Gamma_{j-1} + \|\cdot-\hat x_{k-1}\|^2/(2\lam_{j-1}) $ is $  \lam_{j-1}^{-1} $ strongly convex, we have for every $u\in\dom h$,
	    \[
	    \bar \Gamma_{j-1}(u) + \frac{1}{2\lam_{j-1}} \|u-\hat x_{k-1}\|^2  \ge \bar \Gamma_{j-1}(x_{j-1}) + \frac{1}{2\lam_{j-1}} \|x_{j-1}-\hat x_{k-1}\|^2  + \frac{1}{2 \lam_{j-1}}\|u-x_{j-1}\|^2.
	    \]
	    The statement follows from the above inequality and the second identity in \eqref{property}. 
	\end{proof}
	

The next result presents some basic recursive inequalities for $\{t_j\}$.

\begin{lemma}\label{keyinner}
For every $j\in {\cal C}_k\setminus \{i_k\}$, the following statements hold:
\begin{itemize}
    \item [a)] for every $\tau' \in [0,1]$,  there holds 
    \[
    t_j -\tau' t_{j-1} \le 2 M (1-\tau') \|x_j-x_{j-1}\|  - \left(\frac{\tau'} {2\lam_{j-1}} - \frac{(1-\tau') L}{2}\right) \|x_j-x_{j-1}\|^2- \frac{1-\tau'} {2\lam_j} \|x_j-\hat x_{k-1}\|^2 ;
    \]
    \item[b)] if $\lam_{j-1} \le \tau/(2(1-\tau)L)$, then we have
    \begin{equation}\label{ineq:re_t2}
    t_j -\tau t_{j-1} \le  \frac{4M^2(1-\tau)^2\lam_{j-1}}{\tau }.
 \end{equation}
\end{itemize}
\end{lemma}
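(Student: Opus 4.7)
My approach is to first bound $\phi(x_j)$ from above using the descent-type inequality \eqref{ineq:est} together with $\Gamma_j(x_j) \ge \ell_\phi(x_j;x_{j-1})$, then combine this with the prox-optimality information for $x_{j-1}$ encoded in Lemma \ref{lem:101}(b), and finally use the monotonicity $\lam_j \le \lam_{j-1}$ built into Ad-GPB.

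\textbf{Step 1 (reduce to an inequality at $x_j$).} Since $y_j\in\{y_{j-1},x_j\}$ minimizes $\phi$, I have $\phi(y_j)\le \phi(y_{j-1})$ and $\phi(y_j)\le \phi(x_j)$. For any $\tau'\in[0,1]$ these yield
\[
\phi(y_j)-\tau'\phi(y_{j-1})\;\le\;(1-\tau')\phi(y_j)\;\le\;(1-\tau')\phi(x_j).
\]
Next I invoke \eqref{ineq:est} with $(x,y)=(x_j,x_{j-1})$ and the fact that $\ell_\phi(\cdot;x_{j-1})\le \Gamma_j$ from \eqref{eq:Gamma_j-1}, which gives
\[
(1-\tau')\phi(x_j)\;\le\;(1-\tau')\Gamma_j(x_j)+2M(1-\tau')\|x_j-x_{j-1}\|+\tfrac{(1-\tau')L}{2}\|x_j-x_{j-1}\|^2.
\]

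\textbf{Step 2 (insert $m_j$ and $m_{j-1}$).} Using $m_j=\Gamma_j(x_j)+\tfrac{1}{2\lam_j}\|x_j-\hat x_{k-1}\|^2$ I rewrite $(1-\tau')\Gamma_j(x_j)-m_j=-\tau'\Gamma_j(x_j)-\tfrac{1}{2\lam_j}\|x_j-\hat x_{k-1}\|^2$. To handle $\tau' m_{j-1}$, apply Lemma \ref{lem:101}(b) with $u=x_j$:
\[
m_{j-1}\;\le\;\bar\Gamma_{j-1}(x_j)+\tfrac{1}{2\lam_{j-1}}\|x_j-\hat x_{k-1}\|^2-\tfrac{1}{2\lam_{j-1}}\|x_j-x_{j-1}\|^2.
\]
Combining with $\Gamma_j(x_j)\ge\bar\Gamma_{j-1}(x_j)$ from \eqref{eq:Gamma_j-1} makes the $\bar\Gamma_{j-1}(x_j)$ terms cancel, leaving
\[
-\tau'\Gamma_j(x_j)+\tau' m_{j-1}\;\le\;\tfrac{\tau'}{2\lam_{j-1}}\|x_j-\hat x_{k-1}\|^2-\tfrac{\tau'}{2\lam_{j-1}}\|x_j-x_{j-1}\|^2.
\]

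\textbf{Step 3 (conclude part (a)).} Adding the bounds in Steps 1--2, the coefficient of $\|x_j-\hat x_{k-1}\|^2$ equals $-\tfrac{1}{2\lam_j}+\tfrac{\tau'}{2\lam_{j-1}}$; since $\lam_j\le\lam_{j-1}$ by Ad-GPB's update rule, this is bounded by $-\tfrac{1-\tau'}{2\lam_j}$. Collecting the $\|x_j-x_{j-1}\|^2$ terms yields precisely the claimed inequality in (a).

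\textbf{Step 4 (part (b)).} Apply (a) with $\tau'=\tau$. The assumption $\lam_{j-1}\le \tau/[2(1-\tau)L]$ is equivalent to $\tfrac{(1-\tau)L}{2}\le \tfrac{\tau}{4\lam_{j-1}}$, hence $\tfrac{\tau}{2\lam_{j-1}}-\tfrac{(1-\tau)L}{2}\ge \tfrac{\tau}{4\lam_{j-1}}$. Dropping the nonpositive term $-\tfrac{1-\tau}{2\lam_j}\|x_j-\hat x_{k-1}\|^2$ and maximizing the one-dimensional quadratic $2M(1-\tau)s-\tfrac{\tau}{4\lam_{j-1}}s^2$ over $s=\|x_j-x_{j-1}\|\ge 0$ (the maximum is $\tfrac{a^2}{4b}=\tfrac{4M^2(1-\tau)^2\lam_{j-1}}{\tau}$) yields \eqref{ineq:re_t2}.

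\textbf{Main obstacle.} The proof is essentially bookkeeping, and the only genuinely nontrivial point is Step 3's bound on the coefficient of $\|x_j-\hat x_{k-1}\|^2$: it is only because of the algorithmic fact $\lam_j\le\lam_{j-1}$ (not stated explicitly in Lemma \ref{lem:101}) that the cross-term produced by the prox-inequality \eqref{ineq:Gammaj-1} combines cleanly with $-\tfrac{1}{2\lam_j}\|x_j-\hat x_{k-1}\|^2$ to yield a single negative coefficient $-\tfrac{1-\tau'}{2\lam_j}$ with the structure stated in (a). Everything else is an application of the descent inequality and the one-variable optimization of a concave quadratic.
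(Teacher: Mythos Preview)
Your proof is correct and uses essentially the same ingredients as the paper: the bundle lower bound \eqref{eq:Gamma_j-1} (both the $\ell_\phi$ and the $\bar\Gamma_{j-1}$ parts), the prox inequality \eqref{ineq:Gammaj-1} at $u=x_j$, the descent estimate \eqref{ineq:est}, the fact $\phi(y_j)\le\min\{\phi(y_{j-1}),\phi(x_j)\}$, and the monotonicity $\lam_j\le\lam_{j-1}$. The only cosmetic difference is that the paper organizes the argument as a lower bound on $m_j$ via the single inequality $\Gamma_j(x_j)\ge(1-\tau')\ell_\phi(x_j;x_{j-1})+\tau'\bar\Gamma_{j-1}(x_j)$ and then subtracts, whereas you split this into two separate uses of \eqref{eq:Gamma_j-1} and assemble $t_j-\tau' t_{j-1}$ directly; the algebra and the use of $\lam_j\le\lam_{j-1}$ in Step~3 are identical to the paper's, and part (b) is the same quadratic maximization.
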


\begin{proof}
a) Inequality \eqref{eq:Gamma_j-1} implies that for every $\tau' \in [0,1]$, we have
\begin{align*}
\Gamma_j(x_j) &\ge \max \left \{ \bar \Gamma_{j-1}(x_j),\ell_{\phi}(x_j;x_{j-1}) \right\}  \ge (1-\tau') \ell_\phi (x_j;x_{j-1}) + \tau' \bar \Gamma_{j-1}(x_j).
\end{align*}
The definition of $m_j$ in \eqref{def:mj}, the above inequality,   and \eqref{ineq:Gammaj-1} with $u=x_j$,  imply that
\begin{align*}
  m_j 
  &\ge (1-\tau') \ell_\phi (x_j;x_{j-1}) + \tau' \bar \Gamma_{j-1}(x_j) +   \frac1{2\lam_j} \|x_j-\hat x_{k-1}\|^2 \\
  &= (1-\tau') \left[ \ell_\phi (x_j;x_{j-1}) +\frac1{2\lam_j} \|x_j-\hat x_{k-1}\|^2 \right] +  \tau' \left[\bar \Gamma_{j-1}(x_j) +   \frac1{2\lam_j} \|x_j-\hat x_{k-1}\|^2 \right] \\
    &\overset{\lam_j \le \lam_{j-1}}\ge  (1-\tau') \left[ \ell_\phi (x_j;x_{j-1}) +\frac1{2\lam_j} \|x_j-\hat x_{k-1}\|^2 \right] +  \tau' \left[\bar \Gamma_{j-1}(x_j) +   \frac1{2\lam_{j-1}} \|x_j-\hat x_{k-1}\|^2 \right] \\
  &\overset{\eqref{ineq:Gammaj-1}}\ge (1-\tau') \left[ \ell_\phi (x_j;x_{j-1}) +\frac1{2\lam_j} \|x_j-\hat x_{k-1}\|^2 \right] + \tau' \left[ m_{j-1}+ \frac1{2\lam_{j-1}} \|x_j-x_{j-1}\|^2\right].
\end{align*}
Using this inequality and the definition of $t_j$ in \eqref{def:tj}, we have
\begin{align*}
    & t_j-\tau' t_{j-1} = [ \phi(y_j) - m_j ] - \tau' [ \phi(y_{j-1}) - m_{j-1}] \\
    &= [ \phi(y_j) - \tau'  \phi(y_{j-1}) ] - [ m_j - \tau' m_{j-1}] \\
    &\le  [ \phi(y_j) - \tau'  \phi(y_{j-1}) ] - (1-\tau') \left[ \ell_\phi (x_j;x_{j-1}) +\frac1{2\lam_j} \|x_j-\hat x_{k-1}\|^2 \right] -  \frac{\tau'}{2\lam_{j-1}} \|x_j-x_{j-1}\|^2 \\
    &=  [ \phi(y_j) - \tau'  \phi(y_{j-1}) - (1-\tau') \phi(x_j)] \\
    & \ \ \ +
    (1-\tau') \left[ \phi(x_j) - \ell_\phi (x_j;x_{j-1}) \right]  - \frac{1-\tau'} {2\lam_j} \|x_j-\hat x_{k-1}\|^2 -  \frac{\tau'}{2\lam_{j-1}} \|x_j-x_{j-1}\|^2\\
    &\le  (1-\tau') \left[ \phi(x_j) - \ell_\phi (x_j;x_{j-1}) \right]  - \frac{1-\tau'} {2\lam_j} \|x_j-\hat x_{k-1}\|^2 -  \frac{\tau'}{2\lam_{j-1}} \|x_j-x_{j-1}\|^2,
\end{align*}
where the last inequality is due to the definition of $y_j$ in \eqref{def:yj}. The
conclusion of the statement now follows from the above inequality and relation \eqref{ineq:est} with $(y,x)=(x_{j-1},x_j)$.

b) Using the assumption of this statement  and statement a)
with $\tau'=\tau$, we easily see that  
\[
t_j -\tau t_{j-1} \le 2 M (1-\tau) \|x_j-x_{j-1}\|  -  \frac{\tau  }{4\lam_{j-1}}  \|x_j-x_{j-1}\|^2.
\]
The statement now follows from the above inequality and the inequality $  2ab - b^2 \le a^2 $ with 
\[
a=\frac{2M(1-\tau)\sqrt{\lam_{j-1}}}{\sqrt{\tau}}, \quad b=\frac{\sqrt{\tau}\|x_j-x_{j-1}\|}{2\sqrt{\lam_{j-1}}}.
\]
\end{proof}

The next result describes some properties about the stepsizes $\lam_j$ within any given cycle.  
It uses the fact that if $j$ is a bad iteration of Ad-GPB, then \eqref{keyineq} is violated (see step 2 of Ad-GPB and the first paragraph following Ad-GPB).
 \begin{lemma}\label{keycoro} Define
\begin{equation}\label{def:barlam}
\underline \lam := \min \left\{\frac{\tau \varepsilon}{128(1-\tau)M^2},\frac{\tau}{8(1-\tau)L}\right \};
\end{equation}
    where $ \tau$ is an input to
    Ad-GPB, and $M$ and $L$ are as in Assumption 3. Then, the following statements hold:
 \begin{itemize}
         \item [a)] for every index $j \in {\cal C}_k$, we have 
    \[
     \lam_{j} \ge \min \left\{  \underline \lam,\lam_{i_k}  \right\};
     \]
    
     \item [b)]
    the number of bad iterations in ${\cal C}_k$ 
    is bounded by
    $
    \log_2^+ \left( \lam_{i_k}/\underline \lam\right).
    $
     \end{itemize}
 \end{lemma}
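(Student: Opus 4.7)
The plan is to reduce both parts to a single key claim: for any $j \in {\cal C}_k$ with $j > i_k$ such that $\lam_{j-1} \le 4\underline \lam$, inequality \eqref{keyineq} holds at iteration $j$, and hence $j$ is a good iteration. To establish this claim, I would invoke Lemma~\ref{keyinner}(b). Definition \eqref{def:barlam} directly gives $4\underline \lam \le \tau/(2(1-\tau)L)$, so the hypothesis of Lemma~\ref{keyinner}(b) is met and yields
$$t_j - \tau t_{j-1} \;\le\; \frac{4M^2(1-\tau)^2 \lam_{j-1}}{\tau} \;\le\; \frac{16 M^2(1-\tau)^2 \underline \lam}{\tau} \;\le\; \frac{(1-\tau)\varepsilon}{8},$$
where the last step uses $4\underline \lam \le \tau\varepsilon/(32(1-\tau)M^2)$. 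Since \eqref{keynk} gives $\hat n_{k-1} \le \phi_* \le \phi(y_j)$ and $\beta_{k-1} \ge 0$ (an easy induction from $\beta_0 \ge 0$), the right-hand side of \eqref{keyineq} is at least $(1-\tau)\varepsilon/8$, so the claim follows.

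For part~(a), I would use the contrapositive of this claim: whenever iteration $j \in {\cal C}_k$ is bad, necessarily $j > i_k$ and $\lam_{j-1} > 4\underline \lam$. Since the stepsize update always leaves $\lam_j \in \{\lam_{j-1}, \lam_{j-1}/2\}$, this forces $\lam_{j+1} = \lam_j/2 > \underline \lam$. Consequently, every stepsize produced after at least one halving strictly exceeds $\underline \lam$, while any stepsize produced before the first halving equals $\lam_{i_k}$. Combining the two regimes gives $\lam_j \ge \min\{\underline \lam, \lam_{i_k}\}$ for every $j \in {\cal C}_k$.

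For part~(b), the stepsize halves exactly once per bad iteration, so after $b$ bad iterations within ${\cal C}_k$ the current stepsize equals $\lam_{i_k}/2^b$. If $\lam_{i_k} \le \underline \lam$, the key claim together with the monotonicity of $\{\lam_j\}_{j \in {\cal C}_k}$ precludes any bad iteration, and $b = 0 = \log_2^+(\lam_{i_k}/\underline \lam)$. Otherwise, part~(a) forces $\lam_{i_k}/2^b > \underline \lam$, whence $b < \log_2(\lam_{i_k}/\underline \lam) = \log_2^+(\lam_{i_k}/\underline \lam)$, and integrality of $b$ yields the stated bound. The only subtlety is pure bookkeeping: keeping track of the factor-of-four cushion between $\underline \lam$ and the threshold at which Lemma~\ref{keyinner}(b) forces \eqref{keyineq} to hold.
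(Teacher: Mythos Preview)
Your proof is correct and follows essentially the same strategy as the paper's: both hinge on Lemma~\ref{keyinner}(b) together with the factor-of-four cushion built into \eqref{def:barlam}, and both reduce part~(b) to the identity $\lam_{i_k}/\hat\lam_k = 2^{s_k}$ plus part~(a). The only cosmetic difference is that you isolate the implication ``$\lam_{j-1}\le 4\underline\lam \Rightarrow j$ is good'' as a forward key claim and then take its contrapositive, whereas the paper argues by contradiction on the smallest index violating the bound in~(a) and applies Lemma~\ref{keyinner}(b) one index earlier (at $j-1$ rather than at $j$).
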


 \begin{proof}
     a)  Assume  for contradiction that
     there exists $j \in {\cal C}_k$
     such that
\begin{equation}\label{vio}
\lam_j < \min \left\{ \underline \lam,\lam_{i_k}  \right\},
\end{equation}
and that $j$ is the smallest index in ${\cal C}_k$ satisfying the above inequality.
We claim that this assumption implies that
\begin{equation}\label{claim:direct}
\frac{\lam_{j-2}}4 \le \frac{\lam_{j-1}}2=  \lam_j .
\end{equation}
Before showing the claim, we argue that \eqref{claim:direct}
implies the conclusion of the lemma. Indeed, 
noting that \eqref{def:barlam} and \eqref{claim:direct} implies that $\lam_{j-2} \le 4 \underline\lam \le \tau/(2(1-\tau)L)$, it follows from \eqref{ineq:re_t2} with $j=j-1$ and the definition of $\underline \lam$ in \eqref{def:barlam}  that
\begin{align*}
   t_{j-1} -\tau t_{j-2} &\overset{\eqref{ineq:re_t2}}\le \frac{4(1-\tau)^2\lam_{j-2}M^2}{\tau } \le \frac{16(1-\tau)^2 \lam_j M^2}{\tau }\le \frac{16(1-\tau)^2\underline \lam M^2}{\tau } \\
   &\overset{\eqref{def:barlam}}
   \le (1-\tau)\frac{\varepsilon}{8}  \le (1-\tau)\left(\frac{\beta_{k-1}(\phi(y_{j-1})-\hat n_{k-1})}{2}+\frac{\varepsilon}{8}\right), 
\end{align*}
    where the last inequality is due to the fact that $\phi(y_{j-1})-\hat n_{k-1} \ge 0$. This conclusion then implies that \eqref{keyineq} holds for iteration $j-1$,
    and hence that $\lam_j=\lam_{j-1}$ due to the logic of step 2 of Ad-GPB.
    Since this contradicts \eqref{claim:direct}, statement (a) follows.

    We will now show the above claim, i.e., that the definition of $j$ implies \eqref{claim:direct}. Indeed,
    since the logic of step 2 implies that $\lam_{i_k+1} = \lam_{i_k}$ and $j$ is the smallest index in ${\cal  C}_k$ satisfying \eqref{vio}, we conclude that
 $j \ge i_k+2$ and $\lam_j \ne  \lam_{j-1}$.
Using these conclusions and the fact that the logic of
    step 2 of Ad-GPB implies that
    either $\lam_i = \lam_{i-1}$ or
$\lam_i = \lam_{i-1}/2$ for every $i \in {\cal C}_k\setminus\{i_k\}$, we then conclude that
both the inequality and the identity in \eqref{claim:direct} hold.

     b) Since $\lam_{j+1} = \lam_j$ (resp.,  $\lam_{j+1} = \lam_j/2$) if $j$ is a  good (resp., bad) iteration, we easily see that $\lam_{i_k}/ \hat \lam_k= 2^{s
_k}$. This observation together with (a) then implies that statement (b) holds.
 \end{proof}

It follows from Lemma \ref{keycoro}(b) that the number of bad iterations within the $k$-th cycle
${\cal C}_k$ is finite. 
Proposition \ref{inner} below provides a bound on $|{\cal C}_k|$, and hence shows that
every cycle ${\cal C}_k$ terminates.
Before showing this result,
we state a technical result
which provides some key properties about the sequence $\{t_j \}$.
 \begin{lemma}\label{ineq:a_j}
    The following statements hold:
    \begin{itemize}
       \item [a)] if $ j \in {\cal C}_k \setminus \{i_k\}$,  then $t_j \le t_{j-1}$.
        \item [b)]  if $ j \in {\cal C}_k \setminus \{i_k\}$ is a good iteration that is not the last one in ${\cal C}_k$, then
        \[
        t_{j} - \frac{\varepsilon}{8} \le \frac{2\tau}{1+\tau} \left (t_{j-1} - \frac{\varepsilon}{8}\right);
        \]
        \item[c)]
        if $j \in {\cal C}_k $  is not the last iteration of ${\cal C}_k$, then
        \begin{equation}\label{keytj}
       t_{j} - \frac{\varepsilon}{8} \le \left(\frac{2\tau}{1+\tau} \right) ^{j-i_k-s_k} \left (t_{i_k} - \frac{\varepsilon}{8}\right)
         \end{equation}
        where $s_k$ denotes the number of bad iterations within cycle $k$.
    \end{itemize}
\end{lemma}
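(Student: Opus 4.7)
The plan is to establish the three parts sequentially, using (a) and (b) as the building blocks for the induction in (c). For part (a), I would specialize Lemma~\ref{keyinner}(a) to the choice $\tau' = 1$: every $(1-\tau')$-weighted term on the right-hand side vanishes, and the surviving term $-\|x_j - x_{j-1}\|^2/(2\lam_{j-1})$ is non-positive, so $t_j - t_{j-1} \le 0$.

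For part (b), I would combine two consequences of the logic of step~2 of Ad-GPB. Since $j \ne i_k = j_{k-1}+1$ and $j$ is good, the ``$j = j_{k-1}+1$'' clause in step~2 is unavailable and therefore \eqref{keyineq} must hold at iteration $j$. Since $j$ is also not the last iteration of the cycle, a null update occurs at $j$, so the cycle-stopping inequality is violated, giving $\beta_{k-1}[\phi(y_j) - \hat n_{k-1}] < t_j - \varepsilon/4$. Substituting this into the right-hand side of \eqref{keyineq} produces
\[
t_j - \tau t_{j-1} < (1-\tau)\left\{\frac{t_j - \varepsilon/4}{2} + \frac{\varepsilon}{8}\right\} = (1-\tau)\frac{t_j}{2},
\]
which rearranges to $t_j < \frac{2\tau}{1+\tau}t_{j-1}$. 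Since $\frac{2\tau}{1+\tau} \in (0,1)$, so $\frac{2\tau}{1+\tau}\cdot\frac{\varepsilon}{8} \le \frac{\varepsilon}{8}$, subtracting $\varepsilon/8$ from both sides of the previous inequality gives the claim $t_j - \varepsilon/8 \le \frac{2\tau}{1+\tau}(t_{j-1} - \varepsilon/8)$.

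For part (c), I would induct on $j \in {\cal C}_k$ while tracking $\sigma_j$, the number of bad iterations in $\{i_k, \ldots, j\}$; by the observation that $i_k$ is always a good iteration, $\sigma_{i_k} = 0$. The base case $j = i_k$ is immediate. For the inductive step at $j > i_k$ (which is not the last iteration of ${\cal C}_k$ by hypothesis), if $j$ is good I apply (b)---noting $\sigma_j = \sigma_{j-1}$---to pick up a factor of $\frac{2\tau}{1+\tau}$; if $j$ is bad I apply (a) and note $\sigma_j = \sigma_{j-1}+1$, so that the exponent $j - i_k - \sigma_j = (j-1) - i_k - \sigma_{j-1}$ matches the induction hypothesis with no extra factor needed. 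This yields $t_j - \varepsilon/8 \le \left(\frac{2\tau}{1+\tau}\right)^{j - i_k - \sigma_j}(t_{i_k} - \varepsilon/8)$. Finally, to replace $\sigma_j$ by $s_k \ge \sigma_j$, I would note that the existence of a later iteration in ${\cal C}_k$ forces the cycle-stopping criterion to be violated at $i_k$, giving $t_{i_k} > \varepsilon/4$ and hence $t_{i_k} - \varepsilon/8 > 0$; shrinking the factor in $(0,1]$ multiplying a positive quantity then weakens the bound in the desired direction. The main obstacle I anticipate is this bookkeeping step---aligning the genuine decrease count $\sigma_j$ with the exponent $j - i_k - s_k$ in the statement and justifying the positivity of $t_{i_k} - \varepsilon/8$---while the algebra underlying (b) is routine once the two logical facts about the step-2 branching have been identified.
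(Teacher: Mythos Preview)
Your proposal is correct and follows essentially the same approach as the paper: part (a) via Lemma~\ref{keyinner}(a) with $\tau'=1$, part (b) by combining \eqref{keyineq} with the violation of the cycle-stopping test, and part (c) by iterating (a) and (b) while counting good versus bad steps. The only cosmetic difference is in (b): the paper keeps the $\varepsilon/8$ shift throughout and arrives directly at $(1+\tau)(t_j-\varepsilon/8)/2 \le \tau(t_{j-1}-\varepsilon/8)$, whereas you first derive the cleaner $t_j < \tfrac{2\tau}{1+\tau}t_{j-1}$ and then subtract $\varepsilon/8$; both routes are valid and equally short.
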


\begin{proof}
a) The statement immediately follows from Lemma \ref{keyinner}(a) with $\tau'=1$.

b) Assume that $ j \in {\cal C}_k \setminus \{i_k\}$ is a good iteration that is not the last one in ${\cal C}_k$.
This together with the logic of the Ad-GPB imply that \eqref{keyineq} is satisfied
and the cycle-stopping criterion is violated at iteration $j$, i.e.,
\begin{equation}\label{notjk}
t_j -\frac{\varepsilon}{4} >  \beta_{k-1}(\phi(y_{j})-\hat n_{k-1}).
\end{equation}
These two observations then imply that
\begin{align*}
 t_{j} - \frac{\varepsilon}{8}  &\overset{\eqref{keyineq}}\le \tau t_{j-1} +(1-\tau) \left[  \frac{\beta_{k-1}(\phi(y_{j})-\hat n_{k-1}) }2 +
\frac{\varepsilon}{8} \right] - \frac{\varepsilon}{8}
\\
& = \tau \left(t_{j-1}-\frac{\varepsilon
}{8} \right)+\frac{1-\tau}{2} \left[  \beta_{k-1}(\phi(y_{j})-\hat n_{k-1})  \right] 
\\
&\overset{\eqref{notjk}}\le\tau \left(t_{j-1}- \frac{\varepsilon}{8}\right) + \frac{1-\tau}{2}\left(t_{j}-\frac{\varepsilon}{4} \right)  \le \tau \left(t_{j-1}- \frac{\varepsilon}{8} \right)+ \frac{1-\tau}{2}\left (t_{j}-\frac{\varepsilon}{8}\right) ,
\end{align*}   
which can be easily seen to imply that statement b) holds.

c) If $j-i_k-s_k \le 0$, then  \eqref{keytj} obviously follows. Assume then that  $j-i_k-s_k > 0$. The fact that there are at least $j-i_k-s_k$ good iterations
in $\{i_{k}+1, \ldots, j\}$,
 and statements a) and b),   imply that  
 \begin{equation}\label{recuraj}
 t_{j} -\frac{\varepsilon}{8}\le \left (t_{i_k} -\frac{\varepsilon}{8}\right)\left (\frac{2\tau}{1+\tau}\right )^{j-i_k-s_k} .
\end{equation}
and thus \eqref{keytj} follows. 
\end{proof}


\begin{prop}\label{inner}
For every cycle index $k \ge 1$ generated by Ad-GPB,
its size is bounded by
    $|{\cal C}_k| \le s_k + \bar N_k(\varepsilon)+1 $ where
    $s_k$ denotes the number of bad iterations within it and $\bar N_k(\cdot)$ is defined as
    \begin{equation}\label{defNs}
    \bar N_k(\varepsilon) :=\left \lceil
     \frac{1+\tau}{1-\tau}\log^+ \left[ 8 t_{i_k}   \varepsilon^{-1}  \right]  
    \right \rceil.
 \end{equation}
\end{prop}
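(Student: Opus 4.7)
The plan is to argue by contradiction, using Lemma \ref{ineq:a_j}(c) to show that $t_j$ decays geometrically (modulo the shift $\varepsilon/8$) along good iterations, and to combine this with the fact that the cycle-stopping criterion
\[
t_j \le \beta_{k-1}[\phi(y_j)-\hat n_{k-1}] + \frac{\varepsilon}{4}
\]
is automatically satisfied whenever $t_j \le \varepsilon/4$, since $\beta_{k-1}[\phi(y_j)-\hat n_{k-1}] \ge 0$.

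First, I would dispose of the easy case $t_{i_k} \le \varepsilon/8$: by Lemma \ref{ineq:a_j}(a), $t_j \le t_{i_k} \le \varepsilon/8 < \varepsilon/4$ for all $j \in {\cal C}_k$, so the cycle must stop at $j = i_k$, giving $|{\cal C}_k| = 1 \le s_k + \bar N_k(\varepsilon)+1$ since both terms are nonnegative (and $\bar N_k(\varepsilon) = 0$ here because $\log^+[8t_{i_k}/\varepsilon] = 0$).

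Next, assuming $t_{i_k} > \varepsilon/8$, I would suppose for contradiction that $|{\cal C}_k| \ge s_k + \bar N_k(\varepsilon) + 2$, i.e.\ $j_k - i_k \ge s_k + \bar N_k(\varepsilon) + 1$. Setting $j := j_k - 1$, which is not the last iteration of the cycle, Lemma \ref{ineq:a_j}(c) yields
\[
t_{j} - \frac{\varepsilon}{8} \le \rho^{\,j - i_k - s_k}\!\left(t_{i_k} - \frac{\varepsilon}{8}\right), \qquad \rho := \frac{2\tau}{1+\tau} \in (0,1),
\]
with $j - i_k - s_k \ge \bar N_k(\varepsilon)$. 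The main (short) technical step is bounding $\rho^{\bar N_k(\varepsilon)}$: writing $1/\rho = 1 + (1-\tau)/(2\tau)$ and applying the elementary inequality $\log(1+u) \ge u/(1+u)$ with $u = (1-\tau)/(2\tau)$ gives $\log(1/\rho) \ge (1-\tau)/(1+\tau)$, so
\[
\rho^{\bar N_k(\varepsilon)} \le \exp\!\left(-\frac{1-\tau}{1+\tau}\cdot \frac{1+\tau}{1-\tau}\log\frac{8 t_{i_k}}{\varepsilon}\right) = \frac{\varepsilon}{8 t_{i_k}}.
\]
Plugging this in and using $t_{i_k} - \varepsilon/8 \le t_{i_k}$ yields $t_j - \varepsilon/8 \le \varepsilon/8$, i.e.\ $t_j \le \varepsilon/4$, which triggers the cycle-stopping criterion at iteration $j = j_k - 1$, contradicting the definition of $j_k$.

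The main obstacle, though mild, is ensuring that the shift by $\varepsilon/8$ in Lemma \ref{ineq:a_j}(c) translates cleanly into the $\log^+$ inside $\bar N_k(\varepsilon)$; the above calculation handles this via the trivial inequality $t_{i_k} - \varepsilon/8 \le t_{i_k}$, which in turn is what lets us use $\log^+[8 t_{i_k}/\varepsilon]$ rather than $\log^+[8(t_{i_k}-\varepsilon/8)/\varepsilon]$. Everything else is bookkeeping: separating good from bad iterations (there are at most $s_k$ bad ones inside ${\cal C}_k$, hence at least $j-i_k-s_k$ good iterations in $\{i_k+1,\dots,j\}$, which is exactly what Lemma \ref{ineq:a_j}(c) requires).
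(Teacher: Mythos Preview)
Your proposal is correct and follows essentially the same route as the paper: handle the trivial case $t_{i_k}\le\varepsilon/8$ first, then argue by contradiction using Lemma~\ref{ineq:a_j}(c) together with the fact that any non-last iteration of ${\cal C}_k$ must violate the cycle-stopping criterion, hence satisfy $t_j>\varepsilon/4$. The only cosmetic difference is that the paper, instead of fixing $j=j_k-1$ and showing the stopping criterion would already have triggered there, takes an arbitrary $J$ with $J+1\in{\cal C}_k$ and $J-i_k>s_k+\bar N_k(\varepsilon)-1$, and derives from $\varepsilon/8\le t_J-\varepsilon/8\le\rho^{J-i_k-s_k}t_{i_k}$ the reverse inequality $J-i_k-s_k\le\bar N_k(\varepsilon)-1$; the two contradictions are equivalent.
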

\begin{proof}
If $t_{i_k} <\varepsilon/8$, then the cycle-stopping criterion is satisfied with $j=i_k$. This implies that $|{\cal C}_k|=1$, and hence that the result trivially holds in this case. From now on, assume $t_{i_k} \ge \varepsilon/8$ and suppose for contradiction that $|{\cal C}_k|> s_k +\bar N_k(\varepsilon)+1$.
This implies that
there exists a nonnegative integer $J \ge i_k$ such that  $J+1 \in {\cal C}_k$
 and
 \begin{equation}\label{vio2}
 J-i_k+2 > s_k + \bar N_k(\varepsilon)+1
 \end{equation}
 because the left-hand side of \eqref{vio2} is the cardinality of the index set $\{i_k,\ldots, J+1\}$. 
Since   $J$ is not the last iteration of ${\cal C}_k$, the cycle-stopping criterion in step 2 of Ad-GPB is violated with $j=J$, i.e.,
 \[
t_{J} > \beta_{k-1}[\phi(y_{J})-\hat n_{k-1}]+\frac{\varepsilon}{4} \ge \frac{\varepsilon}{4}.
 \]
This observation together with Lemma~\ref{ineq:a_j}(c) with $j =J$ then imply that
 \[
 \frac{\varepsilon}{8} \le t_{J} - \frac{\varepsilon}{8} \le \left(\frac{2\tau}{1+\tau} \right) ^{J-i_k-s_k} \left (t_{i_k} - \frac{\varepsilon}{8}\right)  \le \left(\frac{2\tau}{1+\tau} \right) ^{J-i_k-s_k}  t_{i_k},
\]
which together with the definition of $\bar N_k (\varepsilon)$ in \eqref{defNs}  can be easily seen to imply that 
\[
J-i_k -s_k  \le  \bar N_k(\varepsilon) -1.
\]
 Since this conclusion contradicts \eqref{vio2},
the conclusion of the proposition follows.
\end{proof}

We now make some remarks about Proposition \ref{inner}.
First, the bound on the length of each cycle  depends on
the number of bad iterations within it.
Second, to obtain the overall iteration complexity of Ad-GPB, it suffices to 
derive a bound on the number of cycles generated by Ad-GPB,
which is the main goal of the subsequent section.

\section{Bounding the number of cycles}\label{outersec}

This section establishes a bound on the number of cycles generated by Ad-GPB.
It contains two subsections. The first one
considers the (much simpler) case where $\phi_*$ is known and
$\hat n_k$ in step 2 of Ad-GPB
is set to $\phi_*$ for every $k \ge 1$.
The second one considers the general case where
$\hat n_k$ is an arbitrary scalar satisfying the condition in step 2 of Ad-GPB.


We start by stating two technical results
that are used in both subsections.

The first one describes  
basic facts about the sextuple
$(\hat \lambda_k,\hat x_k, \hat y_k, \hat \Gamma_k,\hat m_k,\hat t_k)$
 generated at the end of the $k$-th cycle.

\begin{lemma}\label{lem:iterate}
		 For every cycle index $k$ of Ad-GPB, the following statements hold:
		\begin{itemize}
		    \item[a)]
       $
      \hat \Gamma_k \in \bConv{n}$, $ \hat \Gamma_k \le \phi$, and $\dom \hat \Gamma_k = \dom h$;
   \item[b)] we have
      \[
       \hat t_k \le \beta_{k-1}[\phi(\hat y_k) - \hat n_{k-1}]+\frac{\varepsilon}{4};
      \]
   \item[c)] $\hat x_k,\hat y_k \in \dom h$, $\phi(\hat y_k)\le \hat \phi_k^a$, and $\phi(\hat y_k) \le \phi(\hat y_{k-1})$,  where by convention $\hat y_0 = \hat x_0$;
   \item [d)] $\hat \ell_k \ge \hat \ell_{k-1}$ and $\beta_{k} \le \beta_{k-1}$;
   \item[e)] for  every given $ u\in \dom h $, we have
   \begin{equation}\label{ineq:recu}
\phi(\hat y_k) - \hat \Gamma_k(u) \le
\hat t_k + \frac1{2\hat \lam_k} \left[ \|u-\hat x_{k-1}\|^2 - \|u-\hat x_{k}\|^2 \right];
\end{equation}
   \end{itemize}
\end{lemma}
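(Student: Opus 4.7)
The plan is to dispatch the bookkeeping statements (a), (b), (d) immediately from the algorithm's design, then verify (c) using the monotonicity of $\{\phi(y_j)\}$, and finally obtain (e) from a single strong-convexity argument. For (a), since by construction $\hat \Gamma_k = \Gamma_{j_k}$ with $j_k$ the last index of cycle $k$, the three claims are just the $j = j_k$ instance of the already-proved display \eqref{phi-property}. For (b), the serious-update branch in step 2 of Ad-GPB is executed precisely when the cycle-stopping inequality $t_{j} \le \beta_{k-1}[\phi(y_{j}) - \hat n_{k-1}] + \varepsilon/4$ holds at $j = j_k$; the assignments $\hat t_k = t_{j_k}$, $\hat y_k = y_{j_k}$ transport this directly to the desired inequality. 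For (d), the first inequality is immediate from the $\max$ that appears in \eqref{def:nk}, and the second from the fact that the update rule for $\beta_k$ at the end of a cycle either preserves or halves $\beta_{k-1}$.

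For (c), the inclusions $\hat x_k, \hat y_k \in \dom h$ follow from the remarks preceding the lemma (the paragraph containing \eqref{eq:minseq}) applied with $j = j_k$. The inequality $\phi(\hat y_k) \le \phi(\hat y_{k-1})$ follows from the same remarks, since $\phi(y_j)$ is nonincreasing in $j$ and $j_{k-1} < j_k$. The inequality $\phi(\hat y_k) \le \hat \phi_k^a$ then falls out by using this monotonicity to get $\phi(\hat y_l) \ge \phi(\hat y_k)$ for every $l \in \{\lceil k/2 \rceil, \ldots, k\}$, and then taking the convex combination that defines $\hat \phi_k^a$ in \eqref{def:phik}.

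The only non-routine step is (e), which rests on the strong convexity of the $k$-th cycle's final prox subproblem. By definition $\hat x_k = x_{j_k}$ minimizes over $\R^n$ the $\hat \lam_k^{-1}$-strongly convex function $\hat \Gamma_k(\cdot) + \frac{1}{2\hat \lam_k}\|\cdot - \hat x_{k-1}\|^2$, so
\[
\hat \Gamma_k(u) + \frac{1}{2\hat \lam_k}\|u - \hat x_{k-1}\|^2 \;\ge\; \hat m_k + \frac{1}{2\hat \lam_k}\|u - \hat x_k\|^2
\qquad \forall u \in \R^n,
\]
where $\hat m_k$ is the optimal value of that subproblem by the $j=j_k$ case of \eqref{def:mj}. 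Rearranging and adding $\hat t_k = \phi(\hat y_k) - \hat m_k$ (from \eqref{def:tj}) to both sides then yields \eqref{ineq:recu}. I do not anticipate any real obstacle in executing this plan: the lemma is essentially a consolidation of consequences of the algorithm's design, and the only genuine computation is the one-line strong-convexity bound used in (e).
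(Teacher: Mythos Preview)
Your proposal is correct and follows essentially the same approach as the paper's own proof: each of (a)--(d) is obtained by direct inspection of the algorithm's definitions and update rules, and (e) is derived from the $\hat\lam_k^{-1}$-strong convexity of the final prox subproblem $\hat\Gamma_k(\cdot)+\frac{1}{2\hat\lam_k}\|\cdot-\hat x_{k-1}\|^2$ together with $\hat t_k=\phi(\hat y_k)-\hat m_k$. The paper presents the same ingredients with only cosmetic differences in phrasing.
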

\begin{proof}
    a)  It follows from 
\eqref{phi-property} and the fact that
  $\hat \Gamma_k$ is the last $\Gamma_j$
  generated within the $k$-th cycle.	
  
		b)  It follows from the fact that the cycle-stopping criterion in the first line of step 2 of Ad-GPB is satisfied at iteration $j_k$ and the definitions of the quantities  
  $\hat t_k$ and $\hat y_k$ in step 2 of Ad-GPB.
		
  c)  It follows from the first two remarks in the paragraph containing \eqref{eq:minseq}, the definition of $\hat \phi_k^a$ in \eqref{def:phik}, and the fact that
  $\hat x_k$ (resp., $\hat y_k$) is the last $x_j$ (resp., $y_j$)
  generated within the $k$-th cycle. 
  
  d) The first inequality in (d) follows from the definition  $\hat \ell_k$ in \eqref{def:nk}. Moreover, the rule for updating $\beta_k$ in step 2 of Ad-GPB implies that $\beta_k \le \beta_{k-1}$. 
  
  e) Observe that \eqref{phi-property} and the definitions of  the quantities  
  $\hat x_k$, $\hat m_k$, $\hat \Gamma_k$, and $\hat \lam_k$ in step 2 of Ad-GPB, imply that  $(\hat x_k,\hat m_k)$ is the pair of optimal solution and optimal value of
			\begin{equation}\label{optprob}			
			\min \left\lbrace \hat \Gamma_k(u) + \frac{1}{2\hat \lam_k}\|u- \hat x_{k-1} \|^2: u\in\R^n \right\rbrace.
			\end{equation}
   The above observation, the fact that $ \hat \Gamma_k(\cdot) + 1/(2\hat \lam_k)\|\cdot- \hat x_{k-1} \|^2$ is $  1/\hat \lam_k$-strongly convex, together imply that, 
		 for the given $ u\in \dom h $, we have
\begin{equation}
		    \hat m_k + \frac{1}{2\hat \lam_k} \|u-\hat x_{k}\|^2
		\le \hat\Gamma_k(u) + \frac1{2\hat \lam_k}\|u-\hat x_{k-1}\|^2,
		\end{equation}
	and hence that 
 \begin{align*}
         \phi(\hat y_k) - \hat\Gamma_k(u) +\frac{1}{2\hat\lam_k}\| u -\hat x_{k} \|^2
			&{\le} \phi(\hat y_k) - \hat m_k + \frac1{2\hat \lam_k}\|u-\hat x_{k-1}\|^2
			=\hat t_k  + \frac{1}{2\hat \lam_k}\|u-\hat x_{k-1}\|^2, 
		\end{align*}
  where the equality is due to the definition of $\hat t_k$ in step 2 of Ad-GPB.
This shows that \eqref{ineq:recu}, and hence statement (e), holds.
\end{proof}

    The next result provides a uniform  upper (resp., lower) bound
    on the sequence $\{t_{i_k}\}$ 
    (resp. $\hat \lam_k$), and also a bound on the total number of bad iterations generated by Ad-GPB.
     
 \begin{lemma}\label{lem:t1} 
    For every cycle index $k\ge 1$ generated by Ad-GPB, the following statements hold:
    \begin{itemize}
        \item[a)]  $\hat \lam_k \ge \min \{\underline \lam, \lam_1\}$ where $\underline \lam$ is as in \eqref{def:barlam};
\item[b)] $\sum_{l=1}^k s_l \le\log_2^+(\lam_1/\underline \lam)$ where
    $s_l$ denotes the number of bad iterations within cycle $l$.
\item [c)]  we have $ t_{i
    _k}\le \bar t$ where $\bar t$ is as in \eqref{def:bar t}.
    \end{itemize}
	\end{lemma}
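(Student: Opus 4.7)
The plan is to establish the three statements essentially in order: (a) by a short induction, (b) by a clean counting argument once one sees that $\{\lam_j\}$ is globally non-increasing, and (c) from the standing inequality $\Gamma_{i_k}\ge \ell_\phi(\cdot;\hat x_{k-1})$ combined with the Lipschitz-type bound \eqref{ineq:est}.

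For (a), I would argue by induction on $k$. At $k=1$ we have $\lam_{i_1}=\lam_1$, so Lemma~\ref{keycoro}(a) immediately gives $\hat\lam_1 \ge \min\{\underline\lam,\lam_1\}$. For $k\ge 2$, the serious update at iteration $j_{k-1}$ explicitly sets $\lam_{j_{k-1}+1}=\lam_{j_{k-1}}$, i.e., $\lam_{i_k}=\hat\lam_{k-1}$; the induction hypothesis then yields $\lam_{i_k}\ge \min\{\underline\lam,\lam_1\}$, and a second application of Lemma~\ref{keycoro}(a) gives $\hat\lam_k\ge \min\{\underline\lam,\lam_{i_k}\}\ge \min\{\underline\lam,\lam_1\}$.

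For (b), the key observation is that throughout the whole run of Ad-GPB (across and within cycles) the sequence $\{\lam_j\}$ is monotonically non-increasing: good iterations preserve $\lam_j$, serious updates preserve $\lam_j$, and only bad iterations halve it. Consequently, after $S_k:=\sum_{l=1}^k s_l$ total bad iterations we have $\hat\lam_k=\lam_1\cdot 2^{-S_k}$. Combining this with (a) gives $2^{S_k}\le \lam_1/\min\{\underline\lam,\lam_1\}$; in the case $\underline\lam\ge\lam_1$ this forces $S_k=0$ and $\log_2^+(\lam_1/\underline\lam)=0$, while in the case $\underline\lam<\lam_1$ it yields $S_k\le \log_2(\lam_1/\underline\lam)=\log_2^+(\lam_1/\underline\lam)$.

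For (c), I would use that $\Gamma_{i_k}\ge \ell_\phi(\cdot;\hat x_{k-1})$: at $k=1$ this is enforced in step 0, and for $k\ge 2$ it is part of the condition imposed on $\Gamma_{j_{k-1}+1}$ at the end of the previous serious update (recalling $\hat x_{k-1}=x_{j_{k-1}}$ and $i_k=j_{k-1}+1$). Since $y_{i_k}$ is the best-$\phi$ iterate in $\{\hat y_{k-1},x_{i_k}\}$, one has $\phi(y_{i_k})\le \phi(x_{i_k})$, so using the definition of $m_{i_k}$ in \eqref{def:mj} and dropping the nonnegative prox term,
\[
t_{i_k}\le \phi(x_{i_k})-\Gamma_{i_k}(x_{i_k})-\frac{1}{2\lam_{i_k}}\|x_{i_k}-\hat x_{k-1}\|^2 \le \phi(x_{i_k})-\ell_\phi(x_{i_k};\hat x_{k-1}).
\]
Applying \eqref{ineq:est} (whose composite form is unchanged because the $h$-terms cancel) and then bounding $\|x_{i_k}-\hat x_{k-1}\|\le D$ via Assumption~(A1) yields $t_{i_k}\le 2MD+(L/2)D^2=\bar t$. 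The main obstacle, such as it is, is simply making the global non-increase of $\lam_j$ in (b) precise; everything else is careful bookkeeping against the update rules already spelled out in Ad-GPB.
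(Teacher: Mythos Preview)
Your proposal is correct and follows essentially the same route as the paper: part (a) is the same induction via Lemma~\ref{keycoro}(a) and $\lam_{i_k}=\hat\lam_{k-1}$; part (b) is the same telescoping of $\hat\lam_k=\lam_1\,2^{-S_k}$ combined with (a); and part (c) is the same chain $t_{i_k}\le \phi(x_{i_k})-\ell_\phi(x_{i_k};\hat x_{k-1})$ followed by \eqref{ineq:est} and the diameter bound. The only cosmetic difference is that the paper phrases (b) cycle-by-cycle ($\hat\lam_l/\hat\lam_{l-1}=2^{-s_l}$) and in (c) keeps the prox term one line longer before dropping it.
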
	
	\begin{proof}
a) Using the facts that $\lam_{i_k}=\hat \lam_{k-1}$ (see step 2 of Ad-GPB) and Lemma \ref{keycoro}(a) with $j=j
_k$, we conclude that
\[
     \hat \lam_{k} \ge \min \left\{ \underline \lam , \hat \lam_{k-1}  \right\}.\]
The statement then follows by using the above inequality recursively and the convention that $\hat \lam_0 = \lam_1$.

b)  Since the last iteration of a cycle is not bad and $\lam_{j+1}/\lam_j$ is equal to $1/2$ (resp., equal to $1$) if $j$ is a bad iteration, we easily see that 
$\hat \lam_l/\hat \lam_{l-1} = (1/2)^{s_l}$, or equivalently, $\log_2 \hat \lam_{l-1} - \log_2 \hat \lam_l = s_l$, for every cycle $l$ of Ad-GPB, under the convention that $\hat \lam_0:=\lam_1$.
Statement (c) now follows 
by summing the above inequality from $l=1$ to $k$ and using statement a).
     c) Using the facts that $\phi=f+h$ and $ \Gamma_{i_k}(\cdot)\ge \tilde \ell_f(\cdot;\hat x_{k-1})+h(\cdot) $ (see the serious update in step 2 of Ad-GPB), and the definition of $t_j$, $m_j$ and $y_j$ in \eqref{def:tj}, \eqref{def:mj} and \eqref{def:yj}, respectively,  we have
	    \begin{align}
	        t_{i_k} &\overset{\eqref{def:tj}}= \phi(y_{i_k}) - m_{i_k} \overset{\eqref{def:yj}}\le \phi(x_{i_k}) - m_{i_k} 
 \overset{\eqref{def:mj}} = \phi(x_{i_k}) - \Gamma_{i_k}(x_{i_k}) - \frac{1}{2\lam_{i_k}}\|x_{i_k}-\hat x
_{k-1}\|^2 \nonumber\\
	        &\le f(x_{i_k}) - \tilde \ell_f(x_{i_k};\hat x_{k-1})- \frac{1}{2\lam_{i_k}}\|x_{i_k}-\hat x
_{k-1}\|^2 \nonumber\\
	        &\overset{\eqref{ineq:est}}\le 2M \|x_{i_k}-
         \hat x_{k-1}\| + \frac{L}{2}\|x_{i_k}-\hat x
_{k-1}\|^2.
	    \end{align}
Statement c) now follows from the above inequality, Assumption 4, and the fact that $x_{i_k},\hat x
_{k-1} \in \dom h$.
	\end{proof}

It follows from Lemma \ref{lem:t1}(b) and the definition of $\underline \lam$ in \eqref{def:barlam} that the overall number of bad iterations is 
\[
{\cal O}\left( \log_2\left((1-\tau)\left(\frac{M^2}{\varepsilon}+L\right)\right)\right).
\]
\subsection{Case where $\phi_*$ is known}\label{Sec:knwon}

This subsection considers the special case of Ad-GPB 
where
$\phi_*$ is known and
\begin{equation}\label{assphi}
\beta_0 = \frac 12 , \qquad \hat n_k = \phi_* \quad \forall k \ge 1.
\end{equation}
Even though the general result in Theorem \ref{main} holds for this case, the simpler proof presented here covering the above case  helps to understand the proof of the more general case given in Subsection \ref{Sec:Gen} and has the advantage that it does not assume that $\dom h$ is bounded.

For convenience, the simplified version of Ad-GPB,
referred to as Ad-GPB*,
is explicitly stated below.



\noindent\rule[0.5ex]{1\columnwidth}{1pt}
	
	Ad-GPB*
	
	\noindent\rule[0.5ex]{1\columnwidth}{1pt}
	\begin{itemize}
		\item [0.] Let $ x_0\in \dom h $, $\lam_1>0$, $\tau \in (0,1)$, and $\varepsilon>0$ be given; 
  find $\Gamma_1 \in {\cal B}(\phi)$ such that 
  $\Gamma_1 \ge \ell_{\phi}(\cdot; \hat x_0)$ and set
	$y_0=\hat x_0$, $j_0=0$, $j=k=1$; 
	    \item[1.] compute $x_j,m_j,y_j$, and 
     $t_j$ as in \eqref{def:xj-1}, \eqref{def:mj}, \eqref{def:yj}, and \eqref{def:tj}, respectively;
		\item[2.] {\textbf{if}} $t_j \le (\phi(y_j)-\phi_*)/2 +\varepsilon/4$ is violated  {\textbf{then}} perform a {\textbf{null update}, i.e.:}\\  
         $\hspace*{0.8cm}$set $\Gamma_{j+1}=\mbox{BU}(\hat x_{k-1},x_j,\Gamma_j,\lambda_j)$;\\
         $\hspace*{0.8cm}${\textbf{if}} either $j = j_{k-1}+1$ or
         \begin{equation}\label{keyineq2}
        t_j -\tau t_{j-1} \le (1-\tau)\left[\frac{\phi(y_j)-\phi_*}{4}+\frac{\varepsilon}{8}\right]    ,
     \end{equation}
      $\hspace*{0.8cm}$then set $\lam_{j+1}=\lam_{j}$; else, set
$\lam_{j+1}=\lam_{j}/2$;
    \\
        {\textbf{else}} perform a {\bf serious update}, i.e.:\\
          $\hspace*{0.8cm}$set $\lam_{j+1}=\lam_j$ and
          find $\Gamma_{j+1} \in {\cal B}(\phi)$ such that
			$\Gamma_{j+1}\ge \ell_{\phi}(\cdot; x_j)$; \\
   $\hspace*{0.8cm}$set 
           $j_{k} = j$ and $(\hat \lambda_k,\hat x_k, \hat y_k, \hat \Gamma_k,\hat m_k,\hat t_k)=(\lambda_j, x_j, y_j, \Gamma_j,m_j,t_j)$;\\
	$\hspace*{0.8cm}${\textbf{if}}
    $
    \phi(\hat y_k)-\phi_* \le \varepsilon$,
  then  output $ (\hat x_k , \hat y_k)$, and {\bf stop};\\
                $\hspace*{0.8cm}$$ k \leftarrow k+1$;
         	\item[3.] set  $j\leftarrow j+1$ and go to step 1.
	\end{itemize}
	\rule[0.5ex]{1\columnwidth}{1pt}

 We now make some remarks about Ad-GPB*.
 First,  even though the parameter $\beta_0$ can be arbitrarily chosen in $ (0,1/2]$,
 Ad-GPB* is  stated with $\beta_0
 =1/2$ for simplicity. Second, if Ad-GPB* reaches step 2 then
 the primal gap $\phi(y_j)-\phi_*$ is greater than $\varepsilon $ because of step 2 and is substantially larger than this lower bound at its early cycles.
 Hence,
 the right-hand side $(\phi(y_j)-\phi_*)/2+\varepsilon/4$ of its cycle termination criterion in step 2 is always larger than  $3\varepsilon/4$ and is substantially larger than
$3\varepsilon/4$
 at its early cycles. Since, in contrast, GPB terminates a cycle when the inequality $t_j \le \varepsilon/2$ is satisfied,  the cycle termination of Ad-GPB* is always looser, and potentially much looser at  its early cycles, than that of GPB.
 
 
The next lemma formally shows that Ad-GPB* is a specific  instance of Ad-GPB.

\begin{lemma}\label{iterate}
		The following statements hold:
   \begin{itemize}
       \item [a)] Ad-GPB* is a special instance of Ad-GPB with $\beta_0$ and $\{ \hat n_k \}$ chosen as in \eqref{assphi}; moreover,
   $\beta_k =1/2$ for every cycle index  $k \ge 1$;
   \item [b)] for every cycle index  $ k$ of Ad-GPB*,
  we have
   \begin{equation}
       \hat t_k  \le [\phi(\hat y_k) - \phi_*]/2 +\varepsilon/4. \label{tkphi}
     \end{equation}

  \end{itemize}
\end{lemma}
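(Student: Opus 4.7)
The plan is to verify that Ad-GPB* is indeed the instance of Ad-GPB obtained by hard-coding the choices in \eqref{assphi}, and then to track the $\beta_k$-update to show that it never decreases. Part (b) will then be an immediate consequence of Lemma \ref{lem:iterate}(b).

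For statement (a), I would first argue by inspection that, once one sets $\hat n_{k-1} = \phi_*$ and $\beta_{k-1} = 1/2$ in the generic update of Ad-GPB, the cycle-termination inequality in step 2, namely $t_j \le \beta_{k-1}[\phi(y_j) - \hat n_{k-1}] + \varepsilon/4$, becomes exactly $t_j \le (\phi(y_j)-\phi_*)/2 + \varepsilon/4$, and the stepsize-preservation inequality \eqref{keyineq} becomes exactly \eqref{keyineq2}. The remaining steps of the two algorithms (null-update via BUF, the serious-update choice of $\Gamma_{j+1}$, the terminal test $\phi(\hat y_k) - \phi_* \le \varepsilon$, and index bookkeeping) coincide verbatim, so the two algorithms produce identical iterates as long as $\beta_k$ stays at $1/2$.

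To establish $\beta_k = 1/2$ for every $k \ge 1$, I would use induction on $k$, the base case being the choice $\beta_0 = 1/2$. For the inductive step, assume $\beta_{l-1} = 1/2$ and $\hat n_{l-1} = \phi_*$ for every $l \le k$. Substituting these into \eqref{def:gk} gives
\[
\hat g_k = \frac{1}{\sum_{l=\lceil k/2 \rceil}^k \hat \lam_l}\sum_{l=\lceil k/2 \rceil}^k \frac{1}{2}\,\hat \lam_l\,[\phi(\hat y_l) - \phi_*] = \frac{1}{2}\bigl(\hat \phi_k^a - \phi_*\bigr) = \frac{1}{2}\bigl(\hat \phi_k^a - \hat n_k\bigr),
\]
by \eqref{def:phik} and the fact that $\hat n_k = \phi_*$. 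In particular the inequality $\hat g_k \le (\hat \phi_k^a - \hat n_k)/2$ is satisfied (with equality), so the ``if'' branch in the $\beta_k$-update of Ad-GPB is executed, giving $\beta_k = \beta_{k-1} = 1/2$ and closing the induction.

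Statement (b) is then immediate: applying Lemma \ref{lem:iterate}(b) with $\beta_{k-1} = 1/2$ (by statement (a)) and $\hat n_{k-1} = \phi_*$ (by \eqref{assphi}) yields
\[
\hat t_k \le \beta_{k-1}[\phi(\hat y_k) - \hat n_{k-1}] + \frac{\varepsilon}{4} = \frac{\phi(\hat y_k) - \phi_*}{2} + \frac{\varepsilon}{4},
\]
which is \eqref{tkphi}. The only subtlety in the whole argument is the induction for $\beta_k$ — specifically, spotting that the weighted averaging in \eqref{def:gk} collapses cleanly to $(\hat \phi_k^a - \hat n_k)/2$ precisely because every $\beta_{l-1}$ in the sum is the same constant $1/2$; everything else is routine substitution.
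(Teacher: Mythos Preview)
Your proof is correct and follows essentially the same approach as the paper. The only minor difference is that you show $\beta_k = 1/2$ by induction (yielding the exact equality $\hat g_k = (\hat\phi_k^a - \hat n_k)/2$), whereas the paper instead invokes the monotonicity $\beta_l \le \beta_0 = 1/2$ from Lemma~\ref{lem:iterate}(d) to get the inequality $\hat g_k \le (\hat\phi_k^a - \hat n_k)/2$ directly for every $k$, and then concludes $\beta_k=\beta_{k-1}$; both routes verify that the ``if'' branch of the $\beta_k$-update is always taken, and part (b) is derived identically.
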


\begin{proof}
	a) The first claim of (a) is obvious.
 To show that $\beta_k=1/2$ for every index cycle $k \ge 1$ generated by Ad-GPB, it suffices to show that $\beta_k = \beta_{k-1}$ because $\beta_0=1/2$. Indeed,
 using \eqref{assphi}, the facts that 
 $\beta_l \le \beta_0 =1/2$ for $\l \ge 0$ due to Lemma \ref{lem:iterate}(c), 
 and the definitions of $\phi_k^a$ and $\hat g_k$ in \eqref{def:gk} and \eqref{def:phik}, respectively, 
 we conclude that
 \[
 \hat g_k \le \frac 1{2 \sum_{l=\lceil k/2 \rceil}^k \hat \lam_l} \sum_{l =\lceil k/2 \rceil}^k \hat \lam_l [ \phi(\hat y_l) - \phi_*] = 
 \frac{\hat \phi^a_k -\phi_*}2,
 \]
and hence that
 $\beta_k=\beta_{k-1}$ due to
 the update rule for $\beta_k$ at the end of step 2 of Ad-GPB.
 
	b) This statement follows from (a),  Lemma \ref{lem:iterate}(b), and the fact that $\beta_0=1/2$.
\end{proof}

 Let $d_0$ denote the distance of the initial
	point $\hat x_0 \in \dom h$ to
	 the set of optimal solutions $X^*$, i.e.,
\begin{equation}\label{def:d0}
	d_0 := \|\hat x_0-\hat x_*\|, \ \ \mbox{\rm where} \ \ \ 
	\hat x_* := \argmin \{\|\hat x_0-x_*\|: x_*\in X_*\}.
\end{equation}

\begin{lemma}
If $K \ge 1$ is a cycle index generated by Ad-GPB, then we have
 %
 \begin{equation}\label{outer:rec}
\sum_{k=1}^K \hat \lam_k [ \phi(\hat y_k) - \phi_*]  \le   \frac{\varepsilon}{2} \sum_{k=1}^K \hat \lam_k  + d_0^2 .    
 \end{equation}
\end{lemma}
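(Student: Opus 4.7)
The plan is to derive the inequality by combining the per-cycle recursion from Lemma \ref{lem:iterate}(e) with the cycle-stopping bound from Lemma \ref{iterate}(b), and then telescoping over $k = 1, \ldots, K$.

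First, I would invoke Lemma \ref{lem:iterate}(e) at the specific point $u = \hat x_*$ (noting $\hat x_* \in \dom h \subseteq \dom \phi$, so this is a legitimate test point). Since $\hat \Gamma_k \le \phi$ by Lemma \ref{lem:iterate}(a), we get $\hat \Gamma_k(\hat x_*) \le \phi(\hat x_*) = \phi_*$, and hence
\[
\phi(\hat y_k) - \phi_* \;\le\; \hat t_k + \frac{1}{2\hat \lam_k}\left[\|\hat x_* - \hat x_{k-1}\|^2 - \|\hat x_* - \hat x_k\|^2\right].
\]
Multiplying by $\hat \lam_k$ puts this in telescoping form on the right.

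Next, I would substitute the bound from Lemma \ref{iterate}(b), namely $\hat t_k \le [\phi(\hat y_k) - \phi_*]/2 + \varepsilon/4$, into the previous inequality. The key algebraic move is that the $\hat t_k$ contribution $\hat \lam_k [\phi(\hat y_k) - \phi_*]/2$ absorbs half of the left-hand side, leaving
\[
\frac{\hat \lam_k}{2}\left[\phi(\hat y_k) - \phi_*\right] \;\le\; \frac{\varepsilon \hat \lam_k}{4} + \frac{1}{2}\left[\|\hat x_* - \hat x_{k-1}\|^2 - \|\hat x_* - \hat x_k\|^2\right],
\]
i.e.\ after multiplying by two,
\[
\hat \lam_k\left[\phi(\hat y_k) - \phi_*\right] \;\le\; \frac{\varepsilon \hat \lam_k}{2} + \|\hat x_* - \hat x_{k-1}\|^2 - \|\hat x_* - \hat x_k\|^2.
\]

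Finally, I would sum this from $k=1$ to $K$. The distance terms telescope to $\|\hat x_* - \hat x_0\|^2 - \|\hat x_* - \hat x_K\|^2$, which is at most $d_0^2$ by \eqref{def:d0} (and the non-positive final term is discarded). This yields exactly \eqref{outer:rec}. There is no real obstacle here; the only subtlety is being careful that $\hat x_* \in \dom h$ so that Lemma \ref{lem:iterate}(e) applies, and that the factor $\beta_0 = 1/2$ built into Ad-GPB* is precisely what makes the left-hand term's coefficient match with the $\hat t_k$-absorption step.
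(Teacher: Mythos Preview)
Your proposal is correct and follows essentially the same approach as the paper's proof: both apply Lemma \ref{lem:iterate}(e) at $u=\hat x_*$, use $\hat \Gamma_k \le \phi$ to replace $\hat \Gamma_k(\hat x_*)$ by $\phi_*$, insert the bound \eqref{tkphi} from Lemma \ref{iterate}(b) to absorb half of the left-hand side, and then telescope the distance terms. The only cosmetic difference is that you multiply by $2$ before summing whereas the paper sums first and multiplies after; the arguments are otherwise identical.
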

\begin{proof} Relation \eqref{ineq:recu} with $u=\hat x_*$, and the facts that $\hat \Gamma_k \le \phi$ and $\phi_*=\phi(\hat x_*)$, imply that
  \begin{align*}
   \hat \lam_k [ \phi(\hat y_k) - \phi_*] 
 &\le \hat \lam_k [ \phi(\hat y_k) - \hat\Gamma_k(\hat x_*)] \\&
 \le  \hat \lam_k \hat t_k  + \frac{1}{2}\|\hat x_*-\hat x_{k-1}\|^2 -\frac{1}{2}\|\hat x_{k} - \hat x_*\|^2\\
 &\le \hat \lam_k \left[\frac{ \phi(\hat y_k) - \phi_* }{2}+\frac{\varepsilon}{4}\right]+ \frac{1}{2}\|\hat x_*-\hat x_{k-1}\|^2 -\frac{1}{2}\|\hat x_{k} - \hat x_*\|^2
  \end{align*}
  where the last inequality is due to \eqref{tkphi}.
  Simplifying the above inequality and summing the resulting inequality from $k=1,\ldots,K$, we conclude that
  \[
  \frac{1}{2}\sum_{k=1}^K \hat \lam_k [ \phi(\hat y_k) - \phi_*]  \le \frac{\varepsilon}{4} \sum_{k=1}^K \hat \lam_k+  \frac{1}{2}\|\hat x_*-\hat x_{0}\|^2 -\frac{1}{2}\|\hat x_{K} - \hat x_*\|^2.
  \]
  The  statement now follows from the above inequality and  \eqref{def:d0}.
\end{proof}

We are now ready to prove Theorem \ref{main1}.

\begin{theorem}\label{main1}
Ad-GPB* finds an iterate $\hat y_k$ satisfying $\phi(\hat y_k) - \phi_* \le \varepsilon$ in at most
$\hat K(\varepsilon)$ cycles and

   \begin{equation}\label{def:totalknwon}
   \log_2^+ \left( \bar Q \lam_1 \left(\frac{M^2 }{\varepsilon} + \frac{L }{16}\right)\right)+
  \left(
    \frac{1+\tau}{1-\tau}\log^+ \left[ 8 \bar t\varepsilon^{-1} \right] 
    +2
    \right)  \hat K(\varepsilon)
    \end{equation}
   iterations, where $\bar Q$ is as in \eqref{def:barQ} and
   \begin{equation}\label{defhatK}
   \hat K(\varepsilon)
   :=
   \left\lceil \frac{2d_0^2\bar Q}{\varepsilon} \left( \frac{ M^2}{\varepsilon}+ \frac{ L}{16}+\frac{1}{\lam_1 \bar Q}\right) \right\rceil.
   \end{equation}
\end{theorem}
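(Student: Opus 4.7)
The plan is to combine the one-step recursion \eqref{outer:rec} with the lower bound on $\hat\lam_k$ from Lemma \ref{lem:t1}(a) to bound the number of cycles, and then apply the cycle-length bound from Proposition \ref{inner} together with the bad-iteration bound from Lemma \ref{lem:t1}(b) to convert this into a total iteration count.

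First, I would bound the number $K$ of cycles. Suppose that Ad-GPB* has not terminated after the $K$-th cycle. Since $\phi(\hat y_k)$ is nonincreasing (Lemma \ref{lem:iterate}(c)) and the termination test $\phi(\hat y_k)-\phi_*\le \varepsilon$ has failed for every $k\le K$, we have $\phi(\hat y_k)-\phi_* > \varepsilon$ for all such $k$. Substituting this into \eqref{outer:rec} yields
\[
\varepsilon\sum_{k=1}^K \hat\lam_k < \sum_{k=1}^K \hat\lam_k[\phi(\hat y_k)-\phi_*]\le \frac{\varepsilon}{2}\sum_{k=1}^K\hat\lam_k+d_0^2,
\]
whence $\sum_{k=1}^K\hat\lam_k < 2d_0^2/\varepsilon$. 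Using Lemma \ref{lem:t1}(a), $\hat\lam_k\ge \min\{\underline\lam,\lam_1\}$, so $K\min\{\underline\lam,\lam_1\} < 2d_0^2/\varepsilon$, i.e., $K < 2d_0^2/(\varepsilon\min\{\underline\lam,\lam_1\})$. Unwinding the definition of $\underline\lam$ in \eqref{def:barlam} gives $1/\underline\lam=\max\{\bar Q M^2/\varepsilon,\bar Q L/16\}$ with $\bar Q$ as in \eqref{def:barQ}; using $\max\{a,b\}\le a+b$ and adding $1/\lam_1$ then produces the bound
\[
\frac{1}{\min\{\underline\lam,\lam_1\}}\le \bar Q\left(\frac{M^2}{\varepsilon}+\frac{L}{16}+\frac{1}{\lam_1\bar Q}\right),
\]
so $K\le \hat K(\varepsilon)$ as in \eqref{defhatK}, and contrapositively Ad-GPB* terminates within $\hat K(\varepsilon)$ cycles.

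Next, I would convert this into a bound on the total number of iterations. By Proposition \ref{inner}, cycle $k$ has size at most $s_k+\bar N_k(\varepsilon)+1$, where $\bar N_k(\varepsilon)\le \lceil \frac{1+\tau}{1-\tau}\log^+(8\bar t/\varepsilon)\rceil \le \frac{1+\tau}{1-\tau}\log^+(8\bar t/\varepsilon)+1$, using $t_{i_k}\le \bar t$ from Lemma \ref{lem:t1}(c). Summing over $k=1,\ldots,\hat K(\varepsilon)$ and invoking Lemma \ref{lem:t1}(b) to bound $\sum_k s_k\le \log_2^+(\lam_1/\underline\lam)$, together with the same estimate $1/\underline\lam\le \bar Q(M^2/\varepsilon+L/16)$ used above, gives
\[
\sum_{k=1}^{\hat K(\varepsilon)}|{\cal C}_k|\le \log_2^+\!\left(\bar Q\lam_1\Bigl(\tfrac{M^2}{\varepsilon}+\tfrac{L}{16}\Bigr)\right)+\hat K(\varepsilon)\left(\frac{1+\tau}{1-\tau}\log^+\bigl[8\bar t\varepsilon^{-1}\bigr]+2\right),
\]
which is exactly \eqref{def:totalknwon}.

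The main obstacle is mostly bookkeeping rather than any deep argument: one must be careful that the quantities $\underline\lam$, $\bar Q$, and $\bar t$ collapse cleanly into the stated forms, in particular verifying that the two branches of $\min$ in \eqref{def:barlam} both contribute to the single expression $\bar Q(M^2/\varepsilon+L/16)$ and that the logarithmic bookkeeping of the $s_k$ yields the additive logarithmic term in \eqref{def:totalknwon}. Once these algebraic identifications are in place, the argument is a direct synthesis of \eqref{outer:rec}, Lemma \ref{lem:t1}, and Proposition \ref{inner}.
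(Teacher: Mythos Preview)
Your proposal is correct and follows essentially the same approach as the paper's proof: combine \eqref{outer:rec} with the lower bound $\hat\lam_k\ge\min\{\underline\lam,\lam_1\}$ from Lemma~\ref{lem:t1}(a) to bound the number of cycles by $\hat K(\varepsilon)$, then sum the per-cycle bound of Proposition~\ref{inner} using Lemma~\ref{lem:t1}(b),(c) and the identification $1/\underline\lam\le\bar Q(M^2/\varepsilon+L/16)$ to obtain \eqref{def:totalknwon}. The only cosmetic difference is that the paper phrases the cycle bound as a contradiction argument (assume a cycle $K>\hat K(\varepsilon)$ exists and apply \eqref{outer:rec} to the first $K-1$ cycles), whereas you argue directly; the content is identical.
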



\begin{proof}
We first prove that Ad-GPB finds an iterate $\hat y_k$ satisfying $\phi(\hat y_k) - \phi_* \le \varepsilon$ in at most
$\hat K(\varepsilon)$ cycles. Suppose for contradiction that  Ad-GPB generates a cycle $K > \hat K(\varepsilon)$. Since the Ad-GPB did not stop at any of the previous iterations, we have that
   $\phi(\hat y_k)-\phi_* > \varepsilon$ for every $k=1,\ldots,K-1$.  Using the previous observation,  inequality \eqref{outer:rec}, the fact that $K-1 \ge \hat K(\varepsilon)$, and Lemma \ref{lem:t1}(a), we conclude that
        \begin{align*}
      \frac{d_0^2}{\varepsilon} &\overset{\eqref{outer:rec}}\ge   \frac{1}{\varepsilon} \sum_{k=1}^{ K-1} \hat \lam_k [ \phi(\hat y_k) - \phi_*]  - \frac12 \sum_{k=1}^{K-1} \hat \lam_k> \sum_{k=1}^{ K-1} \hat \lam_k -\frac12 \sum_{k=1}^{K-1} \hat \lam_k \\
      &\ge  \frac12 \min \left\{\underline \lam, \lam_1\right\} ( K-1) \ge 
    \frac12 \min \left\{\underline \lam, \lam_1\right\}
      \hat K(\varepsilon).
    \end{align*}
    The definition of $\hat K(\varepsilon)$ in \eqref{defhatK}, the above inequality, and some simple algebraic manipulation on the min term, yield the desired contradiction.
    
   Let $\bar k \le \hat K(\varepsilon)$ denote the numbers of cycles generated by Ad-GPB*.
     Proposition  \ref{inner}, and statements (b) and (c) of Lemma \ref{lem:t1}, then imply that the total number of iterations performed by  Ad-GPB* until it finds an iterate $\hat y_k$ satisfying $\phi(\hat y_k) - \phi_* \le \varepsilon$ is bounded by
    \begin{align*}
    \sum_{k=1}^{\bar k} &|{\cal C}_k| \le \sum_{k=1}^{\bar k} \left(
    \frac{1+\tau}{1-\tau}\log^+ \left[ 8 \bar t\varepsilon^{-1} \right] 
    +2 +s_k
    \right) \\
 &\le \log_2^+  \frac{\lam_1}{\underline \lam} + \left(
    \frac{1+\tau}{1-\tau}\log^+ \left[ 8 \bar t\varepsilon^{-1} \right]
    +2
    \right) \hat K(\varepsilon)
 \end{align*}
 and hence by  \eqref{def:totalknwon}, due to the definitions of $\bar Q$ and $\underline \lam$ in \eqref{def:barQ} and \eqref{def:barlam}, respectively.
\end{proof}
\subsection{General case where $\phi_*$ is unknown}\label{Sec:Gen}
As already mentioned above, this subsection considers the general case (see  step 2 of Ad-GPB) where
$\hat n_k$ is in the interval $[\hat \ell_k,\phi_*]$, where $\hat \ell_k$  is as in \eqref{def:nk},
and derives an upper bound on the number of cycles generated by Ad-GPB.
\begin{lemma}
For every cycle index $k$ of Ad-GPB, we have:
 \begin{equation}\label{outer:rec2}
\hat \phi_k^a - \hat n_k  \le     \hat g_k + \frac{D^2}{2\sum_{l=\lceil k/2 \rceil}^k \hat \lam_l} +\frac{\varepsilon}{4}   
 \end{equation}
 where $D$ is as in \eqref{defD}.
\end{lemma}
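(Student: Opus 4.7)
The plan is to assemble inequality \eqref{outer:rec2} by combining the per-cycle recursion in Lemma~\ref{lem:iterate}(e) with the cycle-stopping bound in Lemma~\ref{lem:iterate}(b), averaging, and then invoking the relation $\hat n_k \ge \inf_u \hat\Gamma_k^a(u)$ from \eqref{keynk}.

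First, I would apply Lemma~\ref{lem:iterate}(e) at cycle index $l$ for an arbitrary $u\in\dom h$ and substitute the bound $\hat t_l \le \beta_{l-1}[\phi(\hat y_l)-\hat n_{l-1}] + \varepsilon/4$ from Lemma~\ref{lem:iterate}(b). Multiplying the resulting inequality by $\hat\lam_l$ makes the coefficient in front of $\|u-\hat x_{l-1}\|^2-\|u-\hat x_l\|^2$ equal to $1/2$, independent of $l$, which is the key observation that enables telescoping. Summing from $l=\lceil k/2\rceil$ to $k$ yields
\[
\sum_{l=\lceil k/2\rceil}^k \hat\lam_l[\phi(\hat y_l)-\hat\Gamma_l(u)] \le \sum_{l=\lceil k/2\rceil}^k \beta_{l-1}\hat\lam_l[\phi(\hat y_l)-\hat n_{l-1}] + \frac{\varepsilon}{4}\sum_{l=\lceil k/2\rceil}^k \hat\lam_l + \frac12\|u-\hat x_{\lceil k/2\rceil-1}\|^2,
\]
where I discarded the nonpositive residual $-\frac12\|u-\hat x_k\|^2$.

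Next, using the definitions of $\hat\phi_k^a$ and $\hat\Gamma_k^a$ in \eqref{def:phik} and \eqref{def:nk}, the left-hand side equals $\big(\sum_{l=\lceil k/2\rceil}^k \hat\lam_l\big)\bigl[\hat\phi_k^a - \hat\Gamma_k^a(u)\bigr]$. Dividing both sides by $\sum_{l=\lceil k/2\rceil}^k \hat\lam_l$ and recognizing the first term on the right as $\hat g_k$ (by \eqref{def:gk}) produces
\[
\hat\phi_k^a - \hat\Gamma_k^a(u) \le \hat g_k + \frac{\varepsilon}{4} + \frac{\|u-\hat x_{\lceil k/2\rceil-1}\|^2}{2\sum_{l=\lceil k/2\rceil}^k \hat\lam_l}.
\]

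Finally, I would specialize $u$ to an optimal solution of $\min_u \hat\Gamma_k^a(u)$. Since $\dom\hat\Gamma_k^a=\dom h$ (Lemma~\ref{lem:iterate}(a) applied to the average of the $\hat\Gamma_l$) and $\dom h$ is bounded by assumption (A1) with diameter $D$, this minimizer lies in $\dom h$ and moreover $\|u-\hat x_{\lceil k/2\rceil-1}\|\le D$ because $\hat x_{\lceil k/2\rceil-1}\in\dom h$ by Lemma~\ref{lem:iterate}(c). For that choice of $u$, the left-hand side becomes $\hat\phi_k^a - \inf_u\hat\Gamma_k^a(u)$, which dominates $\hat\phi_k^a-\hat n_k$ thanks to \eqref{keynk}. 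This yields \eqref{outer:rec2}.

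The main subtlety to watch out for is the telescoping step: it is essential that the prox coefficient $1/(2\hat\lam_l)$ cancels when we multiply by $\hat\lam_l$, which is what allows aggregating the distances across cycles into a single $D^2/2$ term rather than a weighted sum that would not close the argument. Everything else is bookkeeping around the definitions of $\hat\phi_k^a$, $\hat g_k$, $\hat\Gamma_k^a$ and $\hat n_k$.
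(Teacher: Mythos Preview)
Your proposal is correct and follows essentially the same route as the paper's proof: multiply Lemma~\ref{lem:iterate}(e) by $\hat\lam_l$, telescope over $l=\lceil k/2\rceil,\dots,k$, insert the bound on $\hat t_l$ from Lemma~\ref{lem:iterate}(b), divide by $\sum_l \hat\lam_l$, and finish via \eqref{keynk} and the diameter bound (A1). The only cosmetic difference is that the paper first bounds the distance term by $D^2$ and then maximizes over $u\in\dom h$, whereas you specialize $u$ to a minimizer of $\hat\Gamma_k^a$; these are equivalent.
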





\begin{proof}
  Let $u \in \dom h$ be given. Multiplying \eqref{ineq:recu} by $\hat \lam_k$ and summing 
  the resulting inequality from $k=\lceil k/2 \rceil,\ldots,k$,  we have
  \begin{align*}
  \sum_{l=\lceil k/2 \rceil}^k\hat \lam_l [ \phi(\hat y_l) - \hat \Gamma_l(u)]  &\le   \sum_{l=\lceil k/2 \rceil}^k \left(\hat \lam_l  \hat t_l + \frac1{2} \left[ \|u-\hat x_{l-1}\|^2 - \|u-\hat x_{l}\|^2 \right]\right) \\
  &\le \sum_{l=\lceil k/2 \rceil}^k\hat \lam_l \left [\beta_{l-1}[\phi(\hat y_l) - \hat n_{l-1}]+ \frac{\varepsilon}{4}\right] + \frac{1}{2}\|u-\hat x_{\lceil k/2 \rceil-1}\|^2 -\frac{1}{2}\|\hat x_{k} - u\|^2,
  \end{align*}
  where the last inequality is due to Lemma \ref{lem:iterate}(b).
Dividing both sides of the above inequality by $\sum_{l=\lceil k/2 \rceil}^k\hat \lam_l$, 
 and using  the definition of $\hat \phi_k^a$, $\hat \Gamma_k^a(\cdot)$, and $\hat g_k$ in \eqref{def:phik}, \eqref{def:nk}, and \eqref{def:gk}, respectively, we have
 \[
 \hat \phi^a_k - \hat \Gamma^a_k (u)  
 \le  \hat g_k + \frac{\varepsilon}{4} + \frac{\|u-\hat x_{\lceil k/2 \rceil-1}\|^2 }{2\sum_{l=\lceil k/2 \rceil}^k \hat \lam_l} \le  \hat g_k + \frac{\varepsilon}{4} + \frac{D^2 }{2\sum_{l=\lceil k/2 \rceil}^k \hat \lam_l},
 \]
 where the last inequality is due to Assumption (A4) and the fact that $u \in \dom h$ and $  \hat x_{\lceil k/2 \rceil-1} \in \dom h$ where the last inclusion is due to  Lemma \ref{lem:iterate}(c).
 Inequality \eqref{outer:rec2} now follows from \eqref{keynk} and by maximizing the above inequality relative
 to $u \in \dom h$.
\end{proof}

\begin{lemma}\label{lem:typeA}
    For every cycle index $k$ of Ad-GPB, the following statements hold:
    \begin{itemize}
        \item[a)]
       If $\beta_k = \beta_{k-1} $,
 then we have 
\begin{equation}\label{typeA}
  \hat \phi_k^a-\hat n_k \le  \frac{D^2}{\sum_{l=\lceil k/2 \rceil}^k \hat \lam_l}+\frac{\varepsilon}{2};
  \end{equation}
    \item [b)]     If $\beta_k =\beta_{k-1}/2$, then  we have 
    \begin{equation}\label{typeB}
\frac{\hat \phi^a_k-\hat n_k}2 < \hat g_k  \le \beta_{\lceil \frac{k}{2} \rceil -1}(\phi(\hat x_0)- \hat n_0)
\end{equation}
where $\hat \phi^a_k$ is as in \eqref{def:phik}.
\end{itemize}
    \end{lemma}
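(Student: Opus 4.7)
My plan is to exploit the dichotomy in the update rule for $\beta_k$ at the end of step 2 of Ad-GPB: $\beta_k = \beta_{k-1}$ exactly when $\hat g_k \le (\hat \phi_k^a - \hat n_k)/2$, and $\beta_k = \beta_{k-1}/2$ otherwise. Each case of the lemma corresponds to one branch, and the proof in each case is a short manipulation using the already-established inequality \eqref{outer:rec2} together with the monotonicity properties collected in Lemma~\ref{lem:iterate}.

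For part (a), I would start from \eqref{outer:rec2}, namely
$$
\hat \phi_k^a - \hat n_k \le \hat g_k + \frac{D^2}{2\sum_{l=\lceil k/2 \rceil}^k \hat \lam_l} + \frac{\varepsilon}{4},
$$
and substitute the assumption $\hat g_k \le (\hat \phi_k^a - \hat n_k)/2$ that is equivalent to $\beta_k = \beta_{k-1}$. Subtracting $(\hat \phi_k^a - \hat n_k)/2$ from both sides and multiplying by $2$ gives \eqref{typeA}. This part is essentially a one-line calculation.

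For part (b), the strict inequality $(\hat \phi_k^a - \hat n_k)/2 < \hat g_k$ is immediate from the update rule triggering $\beta_k = \beta_{k-1}/2$. The substantive work is the upper estimate on $\hat g_k$. I would plug the definition \eqref{def:gk} into a chain of monotonicity bounds: for every $l \ge \lceil k/2 \rceil$, Lemma~\ref{lem:iterate}(d) gives $\beta_{l-1} \le \beta_{\lceil k/2 \rceil -1}$, while Lemma~\ref{lem:iterate}(c) combined with Lemma~\ref{lem:iterate}(d) yields $\phi(\hat y_l) \le \phi(\hat y_0) = \phi(\hat x_0)$ and $-\hat n_{l-1} \le -\hat n_0$, so that $\phi(\hat y_l) - \hat n_{l-1} \le \phi(\hat x_0) - \hat n_0$. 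Pulling the uniform bound through the convex combination
$$
\hat g_k = \frac{\sum_{l=\lceil k/2 \rceil}^k \beta_{l-1} \hat \lam_l\,[\phi(\hat y_l) - \hat n_{l-1}]}{\sum_{l=\lceil k/2 \rceil}^k \hat \lam_l}
$$
then produces $\hat g_k \le \beta_{\lceil k/2 \rceil - 1}(\phi(\hat x_0) - \hat n_0)$, which is \eqref{typeB}.

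The only mild obstacle is lining up the index shifts correctly, in particular making sure that the bound $\beta_{l-1} \le \beta_{\lceil k/2 \rceil -1}$ is valid for all summation indices (it is, since $l-1 \ge \lceil k/2 \rceil - 1$ and $\{\beta_k\}$ is non-increasing) and that the telescoping over $\phi(\hat y_l)$ and $\hat n_{l-1}$ really reaches back to cycle $0$, which it does because both monotonicities hold for every $l \ge 1$. No new analytic ingredient beyond \eqref{outer:rec2} and the monotonicity lemma is required.
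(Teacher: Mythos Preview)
Your argument follows the same route as the paper's proof: use the $\beta_k$-update dichotomy, combine \eqref{outer:rec2} with $\hat g_k \le (\hat\phi_k^a-\hat n_k)/2$ for part (a), and bound the convex combination defining $\hat g_k$ termwise for part (b).

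One small justification gap in part (b): you invoke Lemma~\ref{lem:iterate}(d) to get $-\hat n_{l-1} \le -\hat n_0$, but that lemma only gives monotonicity of $\{\hat\ell_k\}$, not of $\{\hat n_k\}$ (each $\hat n_k$ is an arbitrary element of $[\hat\ell_k,\phi_*]$, so $\{\hat n_k\}$ need not be monotone). The paper closes this by passing through $\hat\ell$: from \eqref{keynk} one has $\hat n_{l-1}\ge \hat\ell_{l-1}$, Lemma~\ref{lem:iterate}(d) gives $\hat\ell_{l-1}\ge\hat\ell_0$, and step~0 of Ad-GPB sets $\hat n_0=\hat\ell_0$, so indeed $\hat n_{l-1}\ge\hat n_0$. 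With that two-line fix your proof is complete and coincides with the paper's.
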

    \begin{proof}
a) The update rule for $\beta_{k}$ just after equation \eqref{def:gk} and the assumption that $\beta_k=\beta_{k-1}$ imply that $\hat g_k \le (\hat \phi_k^a-\hat n_k)/2$. This observation and
inequality \eqref{outer:rec2} then
immediately imply \eqref{typeA}.

b) The first inequality in \eqref{typeB} follows from the assumption that $\beta_k=\beta_{k-1}/2$ 
and the update rule for $\beta_{k}$ just after equation \eqref{def:gk}.
Moreover, using the definition of $\hat g_k$ in \eqref{def:gk},  the fact that $\beta_{l-1} \le \beta_{\lceil k/2 \rceil-1}$ for every $l \ge \lceil k/2 \rceil$  due to Lemma \ref{lem:iterate}(d), and the fact that $\hat n_{k} \ge \hat \ell_k$ for every $k \ge 1$ in \eqref{keynk} we conclude that 
\begin{align*}
   \hat g_k &\le 
\frac {\beta_{\lceil k/2 \rceil -1}}{\sum_{l=\lceil k/2 \rceil}^k \hat \lam_l} \sum_{l =\lceil k/2 \rceil}^k \hat \lam_l [ \phi(\hat y_l) - \hat n_{l-1} ] \le \frac {\beta_{\lceil k/2 \rceil-1}}{\sum_{l=\lceil k/2 \rceil}^k \hat \lam_l} \sum_{l =\lceil k/2 \rceil}^k \hat \lam_l [ \phi(\hat y_l) - \hat \ell_{l-1} ] \le \beta_{\lceil k/2 \rceil-1}[ \phi(\hat y_0) - \hat \ell_0 ],
\end{align*}
where the last inequality is because
statements (c) and (d) of Lemma \ref{lem:iterate} imply that
$\phi(\hat y_l) \le \phi(\hat y_0)$ and $\hat \ell_l \ge \hat \ell_0$ for every $l \ge 1$. The above inequality, the convention that $\hat y_0 = \hat x_0$,  and the fact that $\hat n_0 = \hat \ell_0$ (see step 0 of Ad-GPB) then imply the second inequality in \eqref{typeB}.
    \end{proof}

\vgap

We are now ready to prove the main result of this paper.

{\bf Proof of Theorem \ref{main}}
To simplify notation, let
$\bar K = \bar K(\varepsilon)$.
It is easy to see that
\beq \label{coroN}
\frac{D^2}{\bar K} \left (\frac{1}{\underline \lam}+\frac{1}{\lam_1} \right) \le \frac{\varepsilon}{4}, \qquad \frac{1}{2^{\bar K-2}}(\phi(\hat x_0)-\hat n_0)<\frac{\varepsilon}{\beta_0 }.
\eeq 

We first prove that Ad-GPB finds an iterate $\hat y_k$ satisfying $\phi(\hat y_k) - \hat n_{k} \le \varepsilon$ in at most
$4\bar K$ cycles. Suppose for contradiction that Ad-GPB generates a cycle $K \ge 4\bar K+1$. Since the Ad-GPB did not stop at cycles
from $1$ to $K-1$, we have that
   \begin{equation}\label{notfinished}\phi(\hat y_k)-\hat n_{k} > \varepsilon \quad \forall k\in \{1,\ldots,4\bar K\}.\end{equation} 
   We then have that 
   \begin{equation}\label{halfbeta}
       \beta_k=\frac{\beta_{k-1}}2 \quad \forall k \in \{\bar K,\ldots,4\bar K\}
       \end{equation} 
since otherwise we would have some
$\beta_k=\beta_{k-1}$ for some
$k \in \{\bar K,\ldots,4\bar K\}$,
and this together with \eqref{notfinished},
 Lemma \ref{lem:iterate}(c),
Lemma \ref{lem:typeA}(a), and Lemma \ref{lem:t1}(a),
would yield the contradiction that
\begin{align*}
\varepsilon< \phi(\hat y_k) - \hat n_k &\overset{\textbf{L.} \ref{lem:iterate}(c)}\le  \hat \phi_k^a-\hat n_k \overset{\textbf{L.} \ref{lem:typeA}(a)}\le   \frac{D^2}{\sum_{l=\lceil k/2 \rceil}^k \hat \lam_l}
 +\frac{\varepsilon}{2} \overset{\textbf{L.} \ref{lem:t1}(a)} \le
 \frac{2D^2}{k  \min \{ \underline \lam, \lam_1\}}+\frac{\varepsilon}{2} \\
 & \ \ \ \ \le
 \frac{2D^2}{\bar K} \left (\frac{1}{\underline \lam}+\frac{1}{\lam_1} \right)+\frac{\varepsilon}{2} \overset{\eqref{coroN}} 
 \le \frac{\varepsilon}2 +\frac{\varepsilon}{2} = \varepsilon,
\end{align*}
where the last inequality is due to the first inequality in \eqref{coroN}.
Relations \eqref{notfinished} and \eqref{halfbeta},
and Lemmas \ref{lem:iterate}(c) and \ref{lem:typeA}(b), all with
$k=4\bar K$, then yield
\[
\varepsilon \overset{\eqref{notfinished}} < \phi(\hat y_{4\bar K})-\hat n_{4\bar K} \overset{\textbf{L.} \ref{lem:iterate}(c)}\le \hat \phi_{4 \bar K}^a-\hat n_{4\bar K} \overset{\textbf{L.} \ref{lem:typeA}(b)}\le 2 \beta_{ 2\bar K-1
 } (\phi(\hat x_0)-\hat n_0)
\overset{\eqref{halfbeta}}\le\frac{1}{2^{\bar K-2}} \beta_{\bar K}(\phi(\hat x_0)-\hat n_0) \overset{\eqref{coroN}}<  \frac{\beta_{\bar K}}{\beta_0} \varepsilon 
\]
where 
the last inequality is due to the second inequality in \eqref{coroN}.
Since $\beta_{\bar K} \le \beta_0$, the above inequality gives the desired contradiction, and hence the first conclusion of theorem holds.

To show the second conclusion of the theorem, let $\bar k  \le 4\bar K$ denote the numbers of cycles generated by Ad-GPB.
     Proposition  \ref{inner}, and statements (b) and (c) of Lemma \ref{lem:t1}   then imply that the total number of iterations that Ad-GPB finds an iterate $\hat y_k$ satisfying $\phi(\hat y_k) -\hat n_{k} \le \varepsilon$ is bounded by
    \begin{align*}
    \sum_{k=1}^{\bar k} &|{\cal C}_k| \le\sum_{k=1}^{\bar k} \left(
    \frac{1+\tau}{1-\tau}\log^+ \left[ 8 \bar t\varepsilon^{-1} \right] 
   +2+s_k\right)\\
 &\le \log_2^+  \frac{\lam_1}{\underline \lam} + 4\bar K\left(
    \frac{1+\tau}{1-\tau}\log^+ \left[ 8 \bar t\varepsilon^{-1} \right] 
    +2
    \right)
 \end{align*}
and hence by  \eqref{eq:totalcomp}, due to the definitions of $\bar Q$ and $\underline \lam$ in \eqref{def:barQ} and \eqref{def:barlam}, respectively.
\endproof
\section{Computational experiments
}\label{numer}
This section reports the computational results of Ad-GPB* and two corresponding practical variants against other modern PB methods and the subgradient method. It contains two subsections. The first one presents the computational results for a simple $l_1$ feasibility problem. The second one showcases the computational results for the Lagrangian cut problem appeared in the area of integer programming.



 All the methods tested in the following two subsections are terminated based on the following criterion:
\begin{equation}\label{rela-var}
\phi(x_k) - \phi_* \le \bar \varepsilon [ \phi(x_0)-\phi_*]
\end{equation}
where $\bar \varepsilon = 10^{-6}$, $10^{-5}$ or $10^{-4}$.
All experiments were performed in MATLAB 2023a and run on a PC with a 16-core Intel Core i9 processor and 32 GB of memory.

Now we describe the algorithm details used in the following two subsections. We first describe Polyak subgradient method.
Given $x_k$, it computes
 \[
     x_{k+1} = \argmin_x \left\{\ell_{\phi} (x;x_k)+\frac{1}{2\lam_{\rm pol}(x_k)}\|x-x_k\|^2\right\}
     \]
     where
\begin{equation}\label{lampol}
    \lam_{\rm pol}(x) := \frac{\phi(x)-\phi_*}{\|g(x)\|^2}
\end{equation}
where $g(x) \in \partial f(x)$.
Next we describe five GPB related methods, namely: GPB, Ad-GPB*, Ad-GPB**, Pol-Ad-GPB*, and Pol-GPB.
 A cycle $k \ge 1$ of Ad-GPB*,
regardless of the way its initial prox stepsize is chosen, is called good if the prox stepsizes $\lam_j$ do not change (i.e., the inequality \eqref{keyineq} is not violated) within it.
 First, GPB is stated in \cite{liang2021proximal,liang2024unified}. Second,
Ad-GPB* is stated in Subsection \ref{Sec:knwon}. Third,
 Ad-GPB** is a corresponding variant of Ad-GPB* that allows the prox stepsize to increase
at the beginning of its initial cycles. 
 Specifically, if $\bar k$ denotes the largest cycle index for which cycles 1 to $\bar k$ are good then Ad-GPB** sets $\lam_{i_{k+1}} = 2 \hat \lam_k$ for every $k \le \bar k$ and afterwards sets
 $\lam_{i_{k+1}} =  \hat \lam_k$ for every $k > \bar k$ as Ad-GPB* does, where $\hat \lam_k$ and $i_{k+1}$ are defined in step 2 of Ad-GPB* and \eqref{def:Ck}, respectively.
 The motivation behind Ad-GPB** is to prevent Ad-GPB
 or Ad-GPB* from generating only small $\lam_j$'s due to a poor choice of  initial prox stepsize $\lam_1$. 
Pol-GPB and Pol-Ad-GPB* are two Polyak-type variants of GPB and Ad-GPB* where
the initial prox stepsize for the $k$th-cycle
is set to
$\lam_{i_k}=40 \lam_{\rm pol}(\hat x_{k-1})$.
 The above five methods all
update the bundle $\Gamma$ according to the 2-cut update scheme described in
 Subsection \ref{Algorithm}. Finally, Ad-GPB*, Ad-GPB** and Pol-Ad-GPB* use $\tau = 0.95$.
\subsection{$l_1$ feasibility problem}
This subsection reports the computational results on a $l_1$ feasibility problem. It contains two subsections. The first one presents computational results of Ad-GPB* and Ad-GPB** against GPB method in \cite{liang2021proximal,liang2024unified}. The second one showcases the computational results of Pol-Ad-GPB* and Pol-GPB
against  subgradient method with Polyak stepsize.  

We start by describing the $l_1$ feasibility problem. The problem can be formulated as: 
\begin{equation}\label{toy}
    \phi_*:=\min_{x \ge 0} f(x):=\|Ax-b\|_{1}
\end{equation}
where $A \in \R^{m\times n}$ and $b\in \R^{ n}$ are known. We consider two different ways of generating the data, i.e., sparse and dense ways. For dense problems,
matrix $A$ is randomly generated in the form  $A=NU$ where the entries of the matrix $N \in \R^{m\times n} $ (resp., $U \in \R^{n \times n}$) are i.i.d sampled from the standard normal $\mathcal{N}(0,1)$ (resp., uniform $\mathcal{U}[0,100]$)
distribution.
  For sparse problem, matrix $A$ is randomly generated in the form  $A=DN$ where the nonzero entries of the sparse matrix $N \in \R^{m\times n} $ are i.i.d sampled from the standard normal $\mathcal{N}(0,1)$ 
distribution and D is a diagonal matrix where the diagonal of $D$ are i.i.d sampled from $\mathcal{U}[0,1000]$. In both cases,
vector $b$ is determined as
 $b=Ax_*$ where $x_*=(v_*)^2$ for some
 vector $v_* \in \R^n$ whose entries are i.i.d sampled from the standard Normal distribution $\mathcal{N}(0,1)$. Finally, we generated  $x_0 = (v_0)^2$ for some vector $v_0 \in \R^n$ whose entries are i.i.d.\ sampled from the uniform distribution over $(0,1)$. Clearly, $x_*$ is a global minimizer of \eqref{toy}, whose
 optimal value $f_*$ equals 
 zero in both cases.  We test our methods on six dense  and six sparse instances.   

We now describe some details about all the tables that appear in this subsection.  We set the target $\bar \varepsilon$ in \eqref{rela-var} as $10^{-5}$ for dense instances and  $10^{-4}$ for sparse instances. 
The quantities $\theta_m$, $\theta_n$ and $\theta_s$  are defined as $\theta_{m}  = m/10^3$, $\theta_{n} =  n/10^3$, and $\theta_s := \text{nnz}(A)/mn$, where $\text{nnz}(A)$ is the number of non-zero entries of $A$.  
An entry in each table is given as a fraction with the numerator expressing the (rounded) number of iterations and the  denominator expressing the CPU running time in seconds. An entry marked as $*/*$ indicates that the CPU running time exceeds the allocated time limit. The
bold numbers highlight the method that has the best performance for each instance.


\subsubsection{Ad-GPB* versus GPB}\label{Ad-GPB**}
This subsection presents computational results of  GPB method in \cite{liang2021proximal,liang2024unified} against Ad-GPB* and  Ad-GPB**.

To check how sensitive the three methods are relative to the initial choice of prox stepsize $\lam_1$, we test them for  $\lam_1=\alpha \lam_{\rm pol}(x_0)$ where $\alpha \in \{0.01,1,100\}$.  
The computational results for the above three methods are given in  Table \ref{tabfirstpb:sparse1} (resp., Table \ref{tabfirstpb:dense2}) for six sparse (resp., dense) instances. 
The time limit is four hours for Table \ref{tabfirstpb:sparse1} and two hours for Table \ref{tabfirstpb:dense2}. 

  \renewcommand{\arraystretch}{1.5}
\begin{table}[H] 
\centering
\addtolength{\tabcolsep}{-5pt}
\begin{tabular}{|c|c|c|c|c|c|c|c|c|c|}
\hline
\multicolumn{1}{|c|}{ALG.} &
\multicolumn{3}{|c|}{GPB}&
\multicolumn{3}{|c|}{ Ad-GPB*}&
\multicolumn{3}{|c|}{ Ad-GPB**}\\
\hline
\diagbox%
{$(\theta_m,\theta_n ,\theta_s)$}{$\alpha \qquad$}&
{ \tt{  $10^{-2}$  } } &{\tt{1}}&{\tt{ 
  $10^2 $   } }& {$10^{-2}$}&{\tt{1}}&{\tt{$10^2$}}&{\tt{$10^{-2}$}}&{\tt{1}}&{\tt{$10^2$}}\\
\hline
\multirow[t]{8}{*}{(1,20,$10^{-2}$)} 
     &$\frac{68.3K}{125}$&$\frac{153.7K }{314}$&$\frac{*}{*}$& {$\frac{26.1K}{36}$}&$\frac{19.6K}{27}$&$\frac{23.3K }{33}$&\boldmath $\frac{17.8K}{26}$&$\frac{21.3K}{31}$ &$\frac{23.3K}{34}$\\
\hline
\multirow[t]{8}{*}{(3,30,$10^{-2}$)} 
     &$\frac{164.2K}{450}$&$\frac{120.8K }{381}$& {$\frac{*}{*}$}&$\frac{59.8K}{152}$&$\frac{39.5k }{102}$&$\frac{62.4K}{164}$&$\frac{55.4K}{144}$ &\boldmath$\frac{36.3K}{94}$&$\frac{62.4K}{165}$\\
\hline
\multirow[t]{8}{*}{(5,50,$10^{-2}$)} 
     &$\frac{123.4K}{811}$&$\frac{99.4K}{654}$& {$\frac{*}{*}$}&$\frac{62.8K}{409}$&$\frac{32.7K }{212}$&$\frac{64.5K}{409}$&$\frac{59.1K}{387}$ &\boldmath$\frac{32.1K}{211}$&$\frac{64.5K}{420}$\\
\hline
\hline
\multirow[t]{8}{*}{(10,100,$10^{-3}$)} 
     &$\frac{152.2K}{928}$&$\frac{132.0K }{837}$& {$\frac{*}{*}$}&$\frac{67.4K}{363}$&\boldmath$\frac{40.5K }{226}$&$\frac{66.9K}{360}$&$\frac{61.8K}{337}$ &$\frac{44.4K}{242}$&$\frac{66.9K}{360}$\\
\hline
\multirow[t]{8}{*}{(20,200,$10^{-3}$)} 
     &$\frac{136.2K}{7119}$&$\frac{111.2K}{5725}$& {$\frac{*}{*}$}&$\frac{78.4K}{2636}$&$\frac{ 41.3K}{1401}$&$\frac{76.0K}{2577}$&$\frac{67.9K}{2384}$ &\boldmath$\frac{40.8K}{1441}$&$\frac{76.0K}{3280}$\\
\hline
\multirow[t]{8}{*}{(50,500,$10^{-4}$)} 
     &$\frac{148.1K}{12574}$&$\frac{130.1K}{11864}$& {$\frac{*}{*}$}&$\frac{70.4K}{3947}$&\boldmath $\frac{ 42.9K}{2460}$&$\frac{65.0K}{3803}$&$\frac{67.1K}{3820}$ &$\frac{43.0K}{2392}$&$\frac{65.0K}{3610}$\\
\hline
\end{tabular}
\caption{Numerical results  for sparse instances.  A relative tolerance of $\bar \varepsilon  = 10^{-4}$ is set and a time limit of
14400 seconds (4 hours) is given.}
\label{tabfirstpb:sparse1}
\end{table}
 \renewcommand{\arraystretch}{1.5}
\begin{table}[H] 
\centering
\addtolength{\tabcolsep}{-5pt}
\begin{tabular}{|c|c|c|c|c|c|c|c|c|c|}
\hline
\multicolumn{1}{|c|}{ALG.} &
\multicolumn{3}{|c|}{GPB}&
\multicolumn{3}{|c|}{ Ad-GPB*}&
\multicolumn{3}{|c|}{ Ad-GPB**}\\
\hline
\diagbox%
{$(\theta_m,\theta_n )$}{$\alpha \qquad    $}  &{\tt{$10^{-2}$}} &{\tt{1}}&{\tt{$10^2$}}& {\tt{$10^{-2}$}}&{\tt{1}}&{\tt{$10^2$}}&{\tt{$10^{-2}$}}&{\tt{1}}&{\tt{$10^2$}}\\
\hline
\multirow[t]{8}{*}{(0.5,1.5)} 
     &$\frac{9354.7K}{1502}$ &$\frac{3323.6K}{462}$&$\frac{*}{*}$&$\frac{74.2K}{6}$&\boldmath {$\frac{47.9K}{5}$}&$\frac{49.7K}{5}$&$\frac{53.8K}{5}$&$\frac{56.4K}{6}$ &$\frac{49.7K}{5}$\\
\hline
\multirow[t]{8}{*}{(1,3)} 
 &\boldmath{$\frac{*}{*}$}&$\frac{5384.9K}{3114}$&$\frac{*}{*}$&$\frac{86.7K}{42}$&$\frac{81.9K}{40}$&$\frac{87.0K}{43}$&$\frac{137.3K}{70}$&\boldmath$\frac{79.0K}{41}$&$\frac{143.1K}{75}$
\\
\hline
\multirow[t]{8}{*}{(2,6)} 
 &$\frac{*}{*}$&$\frac{221.8K}{1214}$& \boldmath$\frac{59.0K}{509}$&$\frac{305.6K}{1552}$&$\frac{181.4K}{911}$& $\frac{134.7K}{677}$&$\frac{136.7K}{685}$&$\frac{176.6K}{882}$&$\frac{133.9K}{669}$
\\
 \hline
\hline
\multirow[t]{8}{*}{(1.5,0.5)} 
 & $\frac{1630.7K}{181}$&$\frac{495.1K}{55}$&$\frac{*}{*}$& $\frac{135.5K}{13}$&\boldmath$\frac{102.5K}{10}$&$\frac{113.1K}{11}$&$\frac{128.4K}{13}$&$\frac{104.6K}{10}$&$\frac{117.8K}{11}$\\
\hline
\multirow[t]{10}{*}{(3,1)} 
 &$\frac{2170.7K}{869}$ &$\frac{502.5K}{199}$ & $\frac{*}{*}$ &$\frac{233.5K}{100}$ &\boldmath$\frac{155.5K}{65}$&$\frac{166.5K}{74}$& $\frac{180.8K}{71}$&$\frac{156.7K}{64}$&$\frac{175.5K}{73}$\\
\hline
\multirow[t]{10}{*}{(6,2)} 
  &$\frac{*}{*}$&$\frac{757.9K}{3542}$& $\frac{*}{*}$& $\frac{351.4K}{1779}$&$\frac{242.4K}{1211}$&$\frac{276.6K}{1376}$& $\frac{304.0K }{1515}$&\boldmath$\frac{238.0K}{1151}$&$\frac{276.6K}{1340}$\\
  \hline

\end{tabular}
\caption{Numerical results  for dense instances.  A relative tolerance of $\bar \varepsilon  = 10^{-5}$ is set and a time limit of
7200 seconds (2 hours) is given.}
\label{tabfirstpb:dense2}
\end{table}

 The results in Tables \ref{tabfirstpb:sparse1} and \ref{tabfirstpb:dense2} show that Ad-GPB* and Ad-GPB**  are generally at least two to three times faster than GPB in terms of CPU running time. Second,  it also shows that Ad-GPB* and Ad-GPB** are more robust to initial stepsize than GPB. We also observe that Ad-GPB** generally performs slightly better for small initial stepsize than Ad-GPB* which accounts for the increase of stepsize at the end of the cycle.

\subsubsection{Polyak type methods}

This subsection considers two Polyak-type variants of GPB and Ad-GPB* where
the initial prox stepsize for the $k$th-cycle
is set to
$\lam_{i_k}=40 \lam_{\rm pol}(\hat x_{k-1})$.
These two variants
in turn are compared
with the subgradient method with Polyak stepsize (see \eqref{lampol}) and the Ad-GPB* and Ad-GPB** variants described in Subsection \ref{Ad-GPB**}.


     
     The computational results for the above five methods are given in  Table \ref{tabfirstpb:sparse3} (resp., Table \ref{tabfirstpb:dense4}) for six sparse (resp., dense) instances.    
The  results for  Ad-GPB* and Ad-GPB** 
are the same ones that appear in Tables \ref{tabfirstpb:sparse1} and \ref{tabfirstpb:dense2}
with $\alpha = 1$. They are  duplicated here for the sake of convenience.  

\renewcommand{\arraystretch}{1}
\begin{table}[H] 
\centering
\begin{tabular}{|c|c|c|c|c|c|}
\hline
\multicolumn{1}{|c|}{$(\theta_m,\theta_n ,\theta_s)$} &
\multicolumn{1}{|c|}{Pol-Sub}&
\multicolumn{1}{|c|}{ Pol-GPB}&
\multicolumn{1}{|c|}{Pol-Ad-GPB*}&
\multicolumn{1}{|c|}{Ad-GPB*}&
\multicolumn{1}{|c|}{Ad-GPB**}\\
\hline
\multirow[t]{8}{*}{(1,20,$10^{-2}$)} 
  &  431.3K/354&33.8K/58&\textbf{15.5K/22}&19.6K/27&21.3K/31  \\
\hline
{(3,30,$10^{-2}$)} 
  &  413.3K/786&126.9K/392&\textbf{21.2K/60}&39.5K/102&36.3K/94  \\
\hline
\multirow[t]{8}{*}{(5,50,$10^{-2}$)} 
  &  389.8k/2440&91.4K/581&\textbf{19.7K/128}&32.7K/212&32.1K/211 \\
  \hline
  \hline
\multirow[t]{8}{*}{(10,100,$10^{-3}$)} 
  &  473.2k/2092&137.12K/876&\textbf{21.3K/157}&40.5K/226&44.4K/242  \\
\hline
\multirow[t]{8}{*}{(20,200,$10^{-3}$)} 
&*/* & 115.7K/5576  &\textbf{25.9K/1070}&41.3K/1401&40.8K/1441  \\
\hline

\multirow[t]{8}{*}{(50,500,$10^{-4}$)} 
&*/* &133.0K/13754  &\textbf{25.5K/1847}&42.9K/2460& 43.0K/2392\\
\hline

\end{tabular}
\caption{Numerical results for sparse instances.  A relative tolerance of $\bar \varepsilon  = 10^{-4}$ is set and a time limit of
14400 seconds (4 hours) is given.}
\label{tabfirstpb:sparse3}
\end{table}

 \renewcommand{\arraystretch}{1}
\begin{table}[H] 
\centering
\begin{tabular}{|c|c|c|c|c|c|}
\hline
\multicolumn{1}{|c|}{
$(\theta_m,\theta_n )$} &
\multicolumn{1}{|c|}{Pol-Sub}&
\multicolumn{1}{|c|}{ Pol-GPB}&
\multicolumn{1}{|c|}{Pol-Ad-GPB*}&
\multicolumn{1}{|c|}{Ad-GPB*}&
\multicolumn{1}{|c|}{Ad-GPB**}\\
\hline
\multirow[t]{8}{*}{(0.5,1.5)} 
  &  479.8K/36.5&143.2K/22.8&73.5K/9.4&47.9K/5&\textbf{56.4K/6 } \\
\hline
\multirow[t]{8}{*}{(1,3)} 
 & 1644.3K/1440& 390.3K/196.7&144.4K/72.1&81.9K/40&\textbf { 79.0K/41}
\\
\hline
\multirow[t]{8}{*}{(2,6)} 
 &354.5K/2513 & \textbf{46.4K/233}&182.5K/959&181.4K/911&176.6K/882
\\
\hline
\hline
\multirow[t]{8}{*}{(1.5,0.5)} 
 &699.4K/79.5 & 109.9K/12.0&\textbf {56.5K/5.6}&102.5K/10&104.6K/10\\
\hline
\multirow[t]{10}{*}{(3,1)} 
 &1034.6K/1046&147.8K/57.9  &  \textbf {89.2K/38.8}&155.5K/65&156.7K/64\\
\hline
\multirow[t]{10}{*}{(6,2)} 
  &*/*&\textbf {136.1K/602}&143.1K/713 &242.4K/1211& 238.0K/1151 \\
\hline

\end{tabular}
\caption{Numerical results for dense instances.  A relative tolerance of $\bar \varepsilon  = 10^{-5}$ is set and a time limit of
7200 seconds (2 hours) is given.}
\label{tabfirstpb:dense4}
\end{table}



Tables \ref{tabfirstpb:sparse3}
 and \ref{tabfirstpb:dense4}
 demonstrate that PB methods generally outperform Pol-Subgrad in terms of CPU running time. Additionally, Pol-Ad-GPB* stands out as a particularly effective variant, outperforming other methods in eight out of twelve instances.

\subsection{Lagrangian cut problem}
This subsection presents the numerical results comparing Ad-GPB* and Pol-Ad-GPB* against GPB and Pol-Sub on a  convex nonsmooth optimization problem that has broad applications in the field of integer programming (see e.g. \cite{zou2019stochastic}).

The problem considered in this subsection arises in the context of solving the 
the  stochastic  binary multi-knapsack problem 

\begin{equation}
\begin{array}{cl}\label{BKP}
\min & c^T x+P(x) \\
\text { s.t. } & A x \geq b \\
& x \in\{0,1\}^n \\
\end{array}
\end{equation}
where 
$P(x):=\mathbb{E}_{\xi}\left[P_{\xi}(x)\right]$ and
\begin{align}\label{Pxi}
P_{\xi}(x):=\min & \quad q(\xi)^T y \\
\text { s.t. } & W y \geq h-T x \nonumber\\
& y \in\{0,1\}^n\nonumber
\end{align}
for every $x \in \{0,1\}^n$.
 In the second-stage problem, only the objective vector 
$q(\xi)$ is a random variable. Moreover,
it is assumed that its support $\Xi$ is a finite set, i.e.,
$q(\cdot)$ has a finite number of scenarios $\xi$'s. 

Benders decomposition  (see e.g. \cite{geoffrion1972generalized,rahmaniani2017Benders}) is an efficient cutting-plane approach for solving \eqref{BKP} which approximates $P(\cdot)$ by pointwise maximum of cuts for $P(\cdot)$.
Specifically, an affine function $A(\cdot)$ such that $P(x') \ge {\cal A}(x')$ for every $ x' \in  \{0,1\}^n$
is called a cut for $P(\cdot)$;
moreover, a cut for $P(\cdot)$
is tight at $x$ if $P(x)={\cal A}(x)$.
Benders decomposition starts with a cut ${\cal A}_0$ for $P(\cdot)$ and compute a sequence $\{x_k\}$ of iterates as follows: given cuts $\{{\cal A}_{i}(\cdot)\}_{i=0}^{k-1}$ for $P(\cdot)$,
it computes $x_k$  as 
\begin{align}\label{appint}
x_{k}= \argmin_{x} & c^T x+P_{k}(x)  \\
\text { s.t. } & A x \geq b \nonumber \\
& x \in\{0,1\}^n  \nonumber
\end{align}
where
\[
P_k (\cdot) = \max_{i=0,\cdots,k-1} {\cal A}_{i}(\cdot);
\]
it then uses $x_k$ to generate 
a new cut ${\cal A}_k$ for $P(\cdot)$ and repeats the above steps with $k$ replaced by  $k+1$.
Problem \eqref{appint} can be
easily 
formulated as an equivalent linear integer programming problem. 

We  now describe how to generate a Lagrangian cut for $P(\cdot)$ using a given point  $x \in \{0,1\}^n$.
First, for every $\xi \in \Xi$, \eqref{Pxi} is equivalent to
$$
\begin{aligned}
\min_{y,u} & \quad q(\xi)^T y \\
\text { s.t. } & W y + Tu \geq h \\
& y \in\{0,1\}^n, \, u \in[0,1]^n\\
&u-x=0.
\end{aligned}
$$
By dualizing the constraint $u-x=0$, we obtain the Lagrangian dual (LD) problem
\begin{equation} \label{generalx'}
    D_\xi(x) :=\max_{\pi} L_\xi(x;\pi) 
\end{equation}
where 
\begin{align}\label{generalx}
L_\xi(x;\pi) &:= \min_{y,u} \, q(\xi)^Ty - \pi^T(u-x)  \\
&\text{s.t.}  \quad Wy+Tu \ge d \nonumber\\
          & \  \qquad y \in \{0,1\}^n, \, u \in [0,1]^n \nonumber.
\end{align}

Let $\pi_\xi(x)$ denote an optimal solution of \eqref{generalx'}.
The optimal values $P_\xi(\cdot)$ and $D_\xi(\cdot)$ of  \eqref{Pxi} and \eqref{generalx'}, respectively, are known to satisfy the following two properties
for every $\xi \in \Xi$ and $x \in \{0,1\}^n$:
\begin{itemize}
    \item [(i)]
 $P_\xi(x') \ge D_\xi(x') \ge D_\xi(x) + \inner{\pi_\xi(x)}{x'-x}$ for  every 
 $x' \in \{0,1\}^n$;
 \item [(ii)]
    $P_{\xi}(x) =  D_\xi(x)$.
\end{itemize}
Property (i) can be found in many textbooks dealing with Lagrangian duality theory
and property (ii) has been established in \cite{zou2019stochastic}. Defining 
\[
\pi(x):=\E[\pi_{\xi}(x)]
\]
and taking expectation of the relations in (i) and (ii), we easily see  that
\[
P(x') \ge P(x) + \inner{\pi(x)}{x' - x} \quad \forall x' \in \{0,1\}^n,
\]
and hence that ${\cal A}_x (\cdot) :=
P(x) + \inner{\pi(x)}{\cdot - x}$ is a tight cut for $P(\cdot)$ at $x$.

Computation of  $P(x)$
assumes that the optimal value $P_\xi(\cdot)$  
of
  \eqref{Pxi} can be efficiently computed for every $\xi \in \Xi$. 
Computation of $\pi(x)$
assumes that an optimal solution of \eqref{generalx'} can be computed for every $\xi \in \Xi$.
Noting that \eqref{generalx'} is an unconstrained convex nonsmooth optimization problem in terms of variable $\pi$ and its optimal value $D_{\xi}(x)$  is the (already computed) optimal value $P_{\xi}(x)$  of \eqref{Pxi},
we use the  Ad-GPB* variant of Ad-GPB to
obtain a near optimal solution
$\approx \pi_\xi(x)$ of \eqref{generalx'}.
 For the purpose of this subsection,
 we use several instances of  \eqref{generalx'} to benchmark the methods described at the beginning of this section. 



For every $(\xi,x) \in \Xi \times \{0,1\}^n$, recall that using Ad-GPB*  to solve \eqref{generalx'}  requires the ability to evaluate $L_\xi(x,\cdot)$ and compute a subgradient of $-L_\xi(x,\cdot)$ at every $\pi \in \R^n$. The value $L_\xi(x,\pi)$ is evaluated  by solving MILP \eqref{generalx}. Moreover, if $(u_\xi(x;\pi),y_\xi(x;\pi)))$ denotes an optimal solution of \eqref{generalx}, then $u_\xi(x;\pi)$ yields a subgradient of $-L_\xi(x,\cdot)$
at $\pi$.
It is worth noting \eqref{generalx'} is a non-smooth convex problem that does not seem to be tractable by the methods discussed in the papers (see e.g. \cite{chen2014optimal,he2015accelerating,he2016accelerated,kolossoski2017accelerated,monteiro2011complexity,nemirovski2004prox,nemirovski1978cesari,nesterov2005smooth})  for solving
min-max smooth convex-concave saddle-point problems, mainly due to the integrality condition imposed on the decision variable $y$ in \eqref{generalx}.

 Next we describe how the data of \eqref{BKP} and \eqref{Pxi} is generated. We generate three random instances of \eqref{BKP},  each following the same methodology as in \cite{angulo2016improving}.  We set $n=240$ and
 $\Xi=\{1,\ldots,20\}$ with each scenario $\xi \in \Xi$ being equiprobable. We generate matrices $A_1, A_2 \in \R^{50\times 120}$,  $T_1, W \in \R^{5\times 120}$, and vector $c \in \R^{240}$, with all entries i.i.d.\ sampled from the uniform distribution over the integers $\{1, \ldots, 100\}$. We then set $A = [A_1 \,  A_2]$ and $T= [T_1 \, 0]$ where the zero block of $T$ is $5\times 120$. Twenty vectors $\{q(\xi)\}_{\xi \in \Xi}$ with components i.i.d.\ sampled from $\{1,\ldots,100\}$ are generated.
 Finally, we set $b=3(A_1 \mathbf{1}+A_2 \mathbf{1})/4$ and $h=3(W \mathbf{1}+T_1 \mathbf{1})/4$ where $\mathbf{1}$ denotes the  vector of ones.




For each randomly generated  instance of \eqref{BKP}, we run Benders decomposition started from $x_0=\mathbf{1}$ to obtain three iterates $x_1$, $x_2$, and $x_3$.
Each $P(x_k)$ and $\pi(x_k)$ for $k=1,2,3$ are computed
using the twenty randomly generated vectors
$\{q(\xi)\}_{\xi \in \Xi}$
as described above, and hence each iteration solves twenty  LD subproblems as in \eqref{generalx'}. Hence, each randomly generated  instance  of  \eqref{BKP} yields a total of sixty LD instances as in \eqref{generalx'}. The total time to solve these sixty LD instances  are given in Table \ref{tabfirstpb:generalx}
for the three instances of \eqref{BKP}  (named $I1$, $I2$ and $I3$ in the table) and all the benchmarked methods
considered in this section.  In this comparison,
both Ad-GPB* and GPB set the initial stepsize $\lam_1$ to $\lam_{\rm pol}(\pi_0)$ where the entries of $\pi_0$ are i.i.d.\ generated from the uniform distribution in $(0,1)$.


\begin{table}[H] 
\centering
\begin{tabular}{|c|c|c|c|c|c|}
\hline
\multicolumn{1}{|c|}{
} &
\multicolumn{1}{|c|}{GPB}&
\multicolumn{1}{|c|}{Ad-GPB*}&
&
\multicolumn{1}{|c|}{Pol-Subgrad}&
\multicolumn{1}{|c|}{Pol-Ad-GPB*}
\\
\hline
 $I1$& 751s& 600s&&1390s&98s \\
\hline
 $I2$& 721s&550s& &1503s&32s\\
\hline
$I3$&1250s& 963s&& 5206s  &98s\\
\hline
\end{tabular}
\caption{Numerical results for solving LD subproblems. A relative tolerance of $\bar \varepsilon  = 10^{-6}$ is set.}
\label{tabfirstpb:generalx}
\end{table}
 Table \ref{tabfirstpb:generalx} shows that Ad-GPB* consistently outperforms GPB. Additionally, it shows that Pol-Ad-GPB* once again surpasses all other methods.
 
\section{Concluding remarks}


This paper presents a parameter-free adaptive proximal bundle method featuring two key ingredients: i) an adaptive strategy for selecting variable proximal step sizes tailored to specific problem instances, and ii) an adaptive cycle-stopping criterion that enhances the effectiveness of serious steps. Computational experiments reveal that our method significantly reduces the number of consecutive null steps (i.e., shorter cycles) while maintaining a manageable number of serious steps. As a result, it requires fewer iterations than methods employing a constant proximal step size and a non-adaptive cycle termination criterion. Moreover, our approach demonstrates considerable robustness to variations in the initial step size provided by the user.

We now discuss some possible extensions of our results. Recall that the complexity analysis of Ad-GPB* assumes that $\dom h$ is bounded, i.e., Lemma \ref{lem:t1}(c)
uses it to give a simple proof that $t_{i_k}$ is bounded
in terms of $(M,L)$ and the diameter $D$ of $\dom h$. Using more complex arguments 
as those used in Appendix A of \cite{liang2024unified},
we can show that 
the complexity analysis of Ad-GPB* can be extended to the setting where $\dom h$ is unbounded. 


  We finally discuss some possible extensions of our analysis in this paper. First, establishing the iteration complexity for Ad-GPB to the case where $\phi_*$ is unknown and $\dom h$ is unbounded is a more challenging and interesting research topic. Second, it would be interesting to analyze the complexities of Pol-GPB and Pol-Ad-GPB* described in Subsection \ref{numer} as they both performed well in our computational experiments. Finally, our current analysis does not apply to the one-cut bundle update scheme (see Subsection 3.1 of \cite{liang2024unified}) since it is not a special case of BUF as already observed in the second remark following BUF. It would be interesting to extend the analysis of this paper to establish the complexity of Ad-GPB based on the one-cut bundle update scheme.

\bibliographystyle{plain}
\bibliography{adaptive_bundle}

\def\cprime{$'$}
\begin{thebibliography}{10}

\bibitem{angulo2016improving}
G.~Angulo, S.~Ahmed, and S.~S Dey.
\newblock Improving the integer l-shaped method.
\newblock {\em INFORMS Journal on Computing}, 28(3):483--499, 2016.

\bibitem{bonnans1995family}
J.~F. Bonnans, C.~Lemar{\'e}chal J.~C.~Gilbert, and C.~A. Sagastiz{\'a}bal.
\newblock A family of variable metric proximal methods.
\newblock {\em Mathematical Programming}, 68:15--47, 1995.

\bibitem{chen2014optimal}
Y.~Chen, G.~Lan, and Y.~Ouyang.
\newblock Optimal primal-dual methods for a class of saddle point problems.
\newblock {\em SIAM Journal on Optimization}, 24(4):1779--1814, 2014.

\bibitem{de2016doubly}
W.~de~Oliveira and M.~Solodov.
\newblock A doubly stabilized bundle method for nonsmooth convex optimization.
\newblock {\em Mathematical programming}, 156(1-2):125--159, 2016.

\bibitem{diaz2023optimal}
M.~D{\'\i}az and B.~Grimmer.
\newblock Optimal convergence rates for the proximal bundle method.
\newblock {\em SIAM Journal on Optimization}, 33(2):424--454, 2023.

\bibitem{du2017rate}
Y.~Du and A.~Ruszczy{\'n}ski.
\newblock Rate of convergence of the bundle method.
\newblock {\em Journal of Optimization Theory and Applications}, 173:908--922, 2017.

\bibitem{geoffrion1972generalized}
A.~M. Geoffrion.
\newblock Generalized benders decomposition.
\newblock {\em Journal of optimization theory and applications}, 10:237--260, 1972.

\bibitem{guigues2024universal}
V.~Guigues, J.~Liang, and R.D.C Monteiro.
\newblock Universal subgradient and proximal bundle methods for convex and strongly convex hybrid composite optimization.
\newblock {\em arXiv preprint arXiv:2407.10073}, 2024.

\bibitem{he2015accelerating}
Y.~He and R.~D.~C. Monteiro.
\newblock Accelerating block-decomposition first-order methods for solving composite saddle-point and two-player nash equilibrium problems.
\newblock {\em SIAM Journal on Optimization}, 25(4):2182--2211, 2015.

\bibitem{he2016accelerated}
Y.~He and R.~D.~C. Monteiro.
\newblock An accelerated hpe-type algorithm for a class of composite convex-concave saddle-point problems.
\newblock {\em SIAM Journal on Optimization}, 26(1):29--56, 2016.

\bibitem{karmitsa2010adaptive}
N.~Karmitsa and M.~M. M{\"a}kel{\"a}.
\newblock Adaptive limited memory bundle method for bound constrained large-scale nonsmooth optimization.
\newblock {\em Optimization}, 59(6):945--962, 2010.

\bibitem{kiwiel2000efficiency}
K.~C Kiwiel.
\newblock Efficiency of proximal bundle methods.
\newblock {\em Journal of Optimization Theory and Applications}, 104(3):589, 2000.

\bibitem{doi:10.1137/040603929}
K.~C. Kiwiel.
\newblock A proximal bundle method with approximate subgradient linearizations.
\newblock {\em SIAM Journal on Optimization}, 16(4):1007--1023, 2006.

\bibitem{kolossoski2017accelerated}
O.~Kolossoski and R.~D.~C. Monteiro.
\newblock An accelerated non-euclidean hybrid proximal extragradient-type algorithm for convex--concave saddle-point problems.
\newblock {\em Optimization Methods and Software}, 32(6):1244--1272, 2017.

\bibitem{lemarechal1975extension}
C.~Lemar{\'e}chal.
\newblock An extension of davidon methods to non differentiable problems.
\newblock In {\em Nondifferentiable optimization}, pages 95--109. Springer, 1975.

\bibitem{lemarechal1978nonsmooth}
C.~Lemar{\'e}chal.
\newblock Nonsmooth optimization and descent methods.
\newblock 1978.

\bibitem{lemarechal1997variable}
C.~Lemar{\'e}chal and C.~Sagastiz{\'a}bal.
\newblock Variable metric bundle methods: from conceptual to implementable forms.
\newblock {\em Mathematical Programming}, 76:393--410, 1997.

\bibitem{liang2021proximal}
J.~Liang and R.~D.~C. Monteiro.
\newblock A proximal bundle variant with optimal iteration-complexity for a large range of prox stepsizes.
\newblock {\em SIAM Journal on Optimization}, 31(4):2955--2986, 2021.

\bibitem{liang2024unified}
J.~Liang and R.~D.~C. Monteiro.
\newblock A unified analysis of a class of proximal bundle methods for solving hybrid convex composite optimization problems.
\newblock {\em Mathematics of Operations Research}, 49(2):832--855, 2024.

\bibitem{liang2023proximal}
J.~Liang, R.~D.~C. Monteiro, and H.~Zhang.
\newblock Proximal bundle methods for hybrid weakly convex composite optimization problems.
\newblock {\em arXiv preprint arXiv:2303.14896}, 2023.

\bibitem{Mifflin1982}
R.~Mifflin.
\newblock {\em A modification and an extension of Lemarechal's algorithm for nonsmooth minimization}, pages 77--90.
\newblock Springer Berlin Heidelberg, Berlin, Heidelberg, 1982.

\bibitem{monteiro2011complexity}
R.~D.~C. Monteiro and B.~F. Svaiter.
\newblock Complexity of variants of {Tseng}'s modified {F-B} splitting and korpelevich's methods for hemivariational inequalities with applications to saddle-point and convex optimization problems.
\newblock {\em SIAM Journal on Optimization}, 21(4):1688--1720, 2011.

\bibitem{nemirovski2004prox}
A.~Nemirovski.
\newblock Prox-method with rate of convergence o (1/t) for variational inequalities with {L}ipschitz continuous monotone operators and smooth convex-concave saddle point problems.
\newblock {\em SIAM J. Optim.}, 15(1):229--251, 2004.

\bibitem{nemirovski1978cesari}
A.~S. Nemirovski and D.~B. Yudin.
\newblock Cesari convergence of the gradient method of approximating saddle points of convex-concave functions.
\newblock In {\em Doklady Akademii Nauk}, volume 239, pages 1056--1059. Russian Academy of Sciences, 1978.

\bibitem{nesterov2005smooth}
Y.~Nesterov.
\newblock Smooth minimization of non-smooth functions.
\newblock {\em Math. Programming}, 103(1):127--152, 2005.

\bibitem{rahmaniani2017Benders}
R.~Rahmaniani, T.~G. Crainic, M.~Gendreau, and W.~Rei.
\newblock The benders decomposition algorithm: A literature review.
\newblock {\em European Journal of Operational Research}, 259(3):801--817, 2017.

\bibitem{wolfe1975method}
P.~Wolfe.
\newblock A method of conjugate subgradients for minimizing nondifferentiable functions.
\newblock In {\em Nondifferentiable optimization}, pages 145--173. Springer, 1975.

\bibitem{zou2019stochastic}
J.~Zou, S.~Ahmed, and X.~A. Sun.
\newblock Stochastic dual dynamic integer programming.
\newblock {\em Mathematical Programming}, 175:461--502, 2019.

\end{thebibliography}

\end{document}